\newcommand{\la}{\lambda}
\newcommand\e{\varepsilon}
\newcommand\ep{\varepsilon}
\newcommand\de{\delta}
\newcommand\ga{\gamma}
\renewcommand{\phi}{\varphi}
\newcommand{\R}{{\mathbb R}} 
\newcommand{\Z}{{\mathbb Z}}
\newcommand{\N}{{\mathbb N}} 
\newcommand{\be}{\begin{equation}}
\newcommand{\ee}{\end{equation}}
\newcommand{\co}{\colon}
\newcommand{\Dr}{\Delta^\rho}
\newcommand{\vol}{\operatorname{vol}}
\newcommand{\supp}{\operatorname{supp}}
\newcommand{\diam}{\operatorname{diam}}
\newcommand{\spec}{\operatorname{spec}}
\def\tilde{\widetilde}
\def \bfo {\begin {eqnarray*}}
\def \efo {\end {eqnarray*} }
\def \beq {\begin {eqnarray}}
\def \eeq {\end {eqnarray}}
\def\bra{\langle}
\def\cet{\rangle}
\theoremstyle{definition}
\newtheorem{definition}{Definition}[section]
\newtheorem{notation}[definition]{Notation}
\newtheorem{example}[definition]{Example}
\theoremstyle{remark}
\newtheorem*{remark}{Remark}
\theoremstyle{plain}
\newtheorem{theorem}[definition]{Theorem} 
\newtheorem{lemma}[definition]{Lemma} 
\newtheorem{proposition}[definition]{Proposition} 
\newtheorem{corollary}[definition]{Corollary} 
\numberwithin{equation}{section}
\begin{document} 

\title{Spectral stability of metric-measure Laplacians}

\author{Dmitri Burago}                                                          
\address{Dmitri Burago: Pennsylvania State University,                          
Department of Mathematics, University Park, PA 16802, USA}                      
\email{burago@math.psu.edu}                                                     
                                                                                
\author{Sergei Ivanov}
\address{Sergei Ivanov:
St.Petersburg Department of Steklov Mathematical Institute,
Russian Academy of Sciences,
Fontanka 27, St.Petersburg 191023, Russia}
\email{svivanov@pdmi.ras.ru}

\author{Yaroslav Kurylev}                                                          
\address{Yaroslav Kurylev:
Department of Mathematics, University College London, Gower Street,
London, WC1E 6BT, UK}                      
\email{y.kurylev@math.ucl.ac.uk}        

\thanks{The first author was partially supported
by NSF grant DMS-1205597.
The second author was partially supported by
RFBR grants 14-01-00062 and 17-01-00128.
The third author was partially supported by 
EPSRC grant EP/L01937X/1}

\keywords{Laplacian, metric-measure space, spectrum, spectral approximation}

\subjclass[2010]{58C40, 53C23, 65J10}

\begin{abstract}
We consider a ``convolution mm-Laplacian'' operator
on metric-measure spaces and study its spectral properties.
The definition is based on averaging over small metric balls.
For reasonably nice metric-measure spaces we prove
stability of convolution Laplacian's spectrum
with respect to metric-measure perturbations
and obtain Weyl-type estimates on the number of eigenvalues.
\end{abstract}

\maketitle 

\section{Introduction}
\label{sec:notation}

This paper is motivated by \cite{BIK14} where we
approximate a compact Riemannian manifold
by a weighted graph and show that the spectra of
the Beltrami--Laplace operator on the manifold
and the graph Laplace operator are close to each other.
The goals pursued in this paper are however different. We introduce
an analog of the Laplacian operator for a class of reasonably
nice metric-measure spaces and study its spectral properties. 
This is where this idea comes from: 
The key constructions of \cite{BIK14} can be regarded
as a definition of an operator which approximates
the Beltrami--Laplace operator. 
The definition is based on averaging over small sets.
The construction makes sense for general metric-measure
spaces, which in particular include Riemannian manifolds
and weighted graphs.

In particular, in this paper we show that analogues
of some results from \cite{BIK14} hold for a large class
of metric-measure spaces.
Namely we consider a ``convolution Laplacian'' operator
with a parameter $\rho>0$ (a radius) and prove
that its spectrum enjoys stability under metric-measure approximations.

Recall that a metric-measure space 
is a triple $(X,d,\mu)$ where $(X,d)$ is a metric space
and $\mu$ is a Borel measure on $X$.
All metric spaces in this paper are compact
and all measures are finite.
We denote by $B_r(x)$ 
the metric ball of radius $r$ centered at
a point $x\in X$.

Our main object of study is defined as follows.

\begin{definition}\label{d:rho-laplacian}
Let $X=(X,d,\mu)$ be a metric-measure space and $\rho>0$.
The \textit{$\rho$-Laplacian}
$\Dr_X\co L^2(X)\to L^2(X)$
is defined by
\be \label{1.21.12}
\Dr_X u(x)= \frac{1}{\rho^2\mu(B_\rho(x))} \int_{B_\rho(x)} \big(u(x)-u(y)\big)\, d\mu(y)
\ee
for $u\in L^2(X)$.
\end{definition}

If $X$ is a Riemannian $n$-manifold,
then $\Dr_X$ converges  as $\rho\to 0$ (e.g.\ on smooth functions)
to the Beltrami--Laplace operator multiplied
by the constant $\frac{-1}{2(n+2)}$.
For general metric-measure spaces, it is not clear what should replace
the normalizing constant $\frac1{2(n+2)}$. 
Thus it does not appear in our definition.
For ``good'' metric-measure spaces operators of this type
(more precisely, the corresponding energy functionals)
have meaningful limits as $\rho\to 0$,
called Korevaar-Schoen type energies
(cf.~\cite{Korevaar-Schoen}).
In particular Korevaar-Schoen type energies can be defined
if the space in question satisfies certain measure contraction properties,
see \cite{Strurm98, Kuwae-Shioya03, Kuwae-Shioya08}.

In this paper we study a different type of problems,
namely we consider the operator $\Dr_X$ for a fixed 
``small'' value of $\rho$.
Our goal is to study the spectrum of $\Dr_X$
and its stability properties. % under metric-measure perturbations .

This extends to the case when $X$ is a discrete space.
In this case all needed geometric data amounts
to weights of points and the information of which 
pairs of points are within distance $\rho$.
This structure is just a weighted graph
(without any lengths assigned to edges)
and the $\rho$-Laplacian  defined by \eqref{1.21.12}
is just the classic weighted graph Laplacian.
The spectral theory of graph Laplacians is a well developed subject,
see e.g.\ \cite{Chung,Sunada}.

In the case when $X$ is a Riemannian manifold, the spectral properties
of $\rho$-Laplacians are studied in \cite{LM} in connection
with random walks on the manifold.
In our previous paper \cite{BIK14} we compare spectra
of weighted graphs approximating a Riemannian manifold $X$
and the spectrum of the Beltrami-Laplace operator $\Delta_X$.
This can be though of as a combination of two steps:
First, one estimates how close the spectra of
$\Dr_X$ and $\Delta_X$ are,
and next, one estimates the difference between the spectra
of $\Dr_X$ and $\Dr_{\Gamma}$ where $\Gamma$ is a graph
approximating~$X$. Here we carry over an analogue
of the second step to general metric-measure spaces.
Our results apply to both ``nice'' metric-measure spaces approximated by
arbitrary ones (Theorem~\ref{t:second}) and to
possibly discrete metric-measure spaces
close to each other (Theorem~\ref{t:stability}).
In the latter theorem we explicitly estimate
the closeness of spectra in terms of the metric-measure
closeness and geometric parameters of the spaces.

It is easy to see that $\Dr_X$ 
is a non-negative self-adjoint operator
with respect to a certain inner product on $L^2(X)$,
see Section \ref{sec:prelim}.
Hence the spectrum of $\Dr_X$ is a subset of $[0,+\infty)$.
Moreover $\spec(\Dr_X)\subset[0,2\rho^{-2}]$.
The spectrum of a bounded self-adjoint operator
divides into the discrete and essential spectra.
The discrete spectrum is the set of
isolated eigenvalues of finite multiplicity
and the essential spectrum is everything else.
It turns out that the essential spectrum of $\Dr_X$,
if nonempty, is the single point $\{\rho^{-2}\}$.
In our set-up we are concerned only with parts
of the spectrum that are substantially below this value.

The following Theorem \ref{t:second} is a not-so-technical implication
of our main results.
It asserts that under suitable conditions
lower parts of $\rho$-Laplacian spectra converge as
the metric-measure spaces in question converge.
Denote by $\la_k(X,\rho)$
the $k$-th smallest eigenvalue of $\Dr_X$
(counting multiplicities).

\begin{theorem}
\label{t:second}
Let a sequence $\{X_n\}$ of compact metric-measure spaces converge
to $X=(X,d,\mu)$ in the sense of Fukaya \cite{Fu}.
Assume that $d$ is a length metric
and $X$ satisfies a version of the Bishop--Gromov inequality:
there is $\Lambda>0$ such that 
\be\label{e:bishop-gromov}
 \frac{\mu(B_{r_1}(x))}{\mu(B_{r_2}(x))} \le \left(\frac{r_1}{r_2}\right)^{\!\!\Lambda}
\ee
for all $x\in X$ and $r_1\ge r_2>0$.
Then 
$$
 \la_k(X,\rho) = \lim_{n\to\infty} \la_k(X_n,\rho)
$$
for all $\rho>0$ and all $k$ such that $\la_k(X,\rho)<\rho^{-2}$.
\end{theorem}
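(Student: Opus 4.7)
The plan is to pass from Fukaya convergence to quantitative metric--measure approximations and then apply the variational min--max principle. Since $\Dr_X$ is self-adjoint with respect to the weighted inner product on $L^2(X)$ with weight $w(x)=\mu(B_\rho(x))$, its Dirichlet form is
\[
Q_X(u)=\frac{1}{2\rho^2}\iint_{X\times X}\mathbf{1}_{\{d(x,y)<\rho\}}\bigl(u(x)-u(y)\bigr)^2\,d\mu(x)\,d\mu(y),
\]
and each eigenvalue strictly below the sole essential-spectrum candidate $\rho^{-2}$ obeys $\la_k(X,\rho)=\inf_{\dim V=k}\sup_{0\neq u\in V} Q_X(u)/\|u\|_w^2$. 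Thus the task reduces to proving that for any fixed $u\in L^2(X)$ an appropriate transfer $T_n u\in L^2(X_n)$ satisfies $\|T_n u\|_{w_n}^2\to\|u\|_w^2$ and $Q_{X_n}(T_n u)\to Q_X(u)$, and symmetrically for transfers in the opposite direction.

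First I would unpack Fukaya convergence into Borel $\ep_n$-approximations $f_n\co X_n\to X$ with $\ep_n\to 0$ that are $\ep_n$-isometric with $\ep_n$-dense image and with $(f_n)_*\mu_n$ close to $\mu$. Define $T_n u=u\circ f_n$, possibly composed with a local averaging of scale $\ep_n\ll\rho$, and let $S_n\co L^2(X_n)\to L^2(X)$ be an analogous transfer using a Borel $\ep_n$-inverse of $f_n$. The Bishop--Gromov hypothesis \eqref{e:bishop-gromov} implies
\[
\mu\bigl(B_{\rho+\de}(x)\bigr)-\mu\bigl(B_{\rho-\de}(x)\bigr)\le C(\Lambda)\,\frac{\de}{\rho}\,\mu\bigl(B_\rho(x)\bigr),
\]
so ball volumes depend Lipschitz-continuously on the radius and, when combined with the length-metric assumption, continuously on the centre point. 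This is the key analytic input: it delivers $\|T_n u\|_{w_n}^2\to\|u\|_w^2$, and the difference $Q_{X_n}(T_n u)-Q_X(u)$ is controlled by an integral supported in the thin shell $\{|d(f_n\tilde x,f_n\tilde y)-\rho|\le C\ep_n\}$ whose $\mu_n\otimes\mu_n$-measure is $O(\ep_n/\rho)$, and hence vanishes in the limit by dominated convergence.

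With these ingredients the upper bound $\limsup_n\la_k(X_n,\rho)\le\la_k(X,\rho)$ is immediate: feed the span of the first $k$ eigenfunctions of $\Dr_X$ through $T_n$ into the min--max on $X_n$. The lower bound $\liminf_n\la_k(X_n,\rho)\ge\la_k(X,\rho)$ is where the main obstacle lies. One transfers the first $k$ eigenspaces of $\Dr_{X_n}$ via $S_n$ to $L^2(X)$ and extracts a weakly convergent subsequence; $Q_X$ is lower semicontinuous under weak convergence, so it suffices to guarantee that the limiting subspace stays $k$-dimensional. The hypothesis $\la_k(X,\rho)<\rho^{-2}$ is essential here: functions whose Rayleigh quotient is strictly below $\rho^{-2}$ cannot concentrate on sets of arbitrarily small $\mu$-measure (such concentration would push the quotient towards the essential-spectrum value $\rho^{-2}$), so weak limits of $w$-normalized eigenfunctions retain full norm and linear independence. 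Making this nonconcentration quantitative from the length-metric and Bishop--Gromov hypotheses, uniformly across the sequence $X_n$, is the technically most demanding step I anticipate.
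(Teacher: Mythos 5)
Your upper bound direction is essentially sound (for a fixed finite-dimensional space spanned by eigenfunctions of $\Dr_X$, which are bounded and continuous under \eqref{e:bishop-gromov}, the pullback-and-dominated-convergence argument can be made to work), but the lower bound $\liminf_n\la_k(X_n,\rho)\ge\la_k(X,\rho)$ is a genuine gap, and you acknowledge as much. The difficulty is worse than you indicate: the spaces $X_n$ carry \emph{no} assumed regularity (they may be arbitrary discrete approximations), so the eigenfunctions of $\Dr_{X_n}$ come with no uniform modulus of control, and your claim that a Rayleigh quotient below $\rho^{-2}$ forbids concentration is not established anywhere and is not a soft fact --- it is precisely the content that needs proving, uniformly in $n$. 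Relatedly, your estimate of the Dirichlet-form error on the shell $\{|d-\rho|\le C\ep_n\}$ by the smallness of its $\mu_n\otimes\mu_n$-measure does not suffice for general $L^2$ functions: $\iint_{\mathrm{shell}}(u(x)-u(y))^2$ must be compared to the Dirichlet form or the weighted norm of $u$ itself, which in the paper is done via the ball-intersection condition (Definition \ref{d:BIV}) together with the spherical-layer condition (Definition \ref{d:SLV}) in Lemma \ref{l:distance-change}; your proposal has no substitute for this when transferring functions \emph{from} $X_n$, where boundedness of $u$ is unavailable.

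The paper takes a different route that sidesteps both the back-and-forth transfer maps and any weak-limit compactness. Fukaya convergence plus full support (a consequence of \eqref{e:bishop-gromov}) is upgraded to $(\ep_n,\de_n)$-closeness (Proposition \ref{p:fukaya}); the SLV and BIV conditions, which hold on $X$ by the length-metric and Bishop--Gromov hypotheses, are \emph{transferred to the approximating spaces} $X_n$ by Lemma \ref{l:conditions-stable}; and then the quantitative Theorem \ref{t:stability} applies. Its proof replaces maps $f_n$ by a measure coupling $\ga$ (Corollary \ref{c:mm-wasserstein}, obtained from a Hall/Dulmage--Mendelsohn-type lemma), realizes both spaces as the common measure space $(X\times Y,\ga)$ with two semi-metrics, identifies the spectra below $\rho^{-2}$ exactly via the splitting Lemma \ref{l:splitting} (the orthogonal complement contributes only the eigenvalue $\rho^{-2}$, which is where the hypothesis $\la_k<\rho^{-2}$ enters), and compares Rayleigh quotients on the common space within factors $1+C\ep/\rho$. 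The min--max formula then yields the two-sided eigenvalue bound symmetrically in one stroke, with an explicit rate, which is exactly the quantitative nonconcentration-free mechanism your sketch is missing.
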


Theorem \ref{t:second} follows from more general
but more technical Theorem \ref{t:stability}
which works for larger classes of spaces
and provides estimates on the rate of convergence.

Now we discuss hypotheses of Theorem \ref{t:second}.
We emphasize that ``niceness'' conditions in Theorem \ref{t:second}
are imposed only on the limit space~$X$. The spaces $X_n$ need not
satisfy them. In particular, $X_n$ can be discrete approximations of~$X$.
Thus, for every ``nice'' space $X$, the spectrum of $\Dr_X$ can be approximated by
spectra of graph Laplacians.

By definition, a metric is a \textit{length metric} if every pair of points
can be connected by a geodesic segment 
realizing the distance between the points.
This condition can be relaxed to an assumption
about intersection of balls, see the BIV condition
in Definition \ref{d:BIV}.

The classic Bishop--Gromov inequality deals with volumes of balls
in Riemannian manifolds with Ricci curvature bounded from below.
It implies \eqref{e:bishop-gromov}
with $\Lambda$ depending on the dimension of the manifold, its diameter,
and the lower bound for Ricci curvature.
(In the case of non-negative Ricci curvature $\Lambda$ is just equal to the dimension.)
The Bishop--Gromov inequality holds for spaces with generalized
Ricci curvature bounds in the sense of Lott--Sturm--Villani \cite{Sturm06,Sturm06-II,LV}.
Other classes of spaces satisfying \eqref{e:bishop-gromov}
include Finsler manifolds, dimensionally homogeneous polyhedral spaces, 
Carnot groups, etc.
We also note that \eqref{e:bishop-gromov} follows from
measure contraction properties
used in \cite{Strurm98, Kuwae-Shioya03, Kuwae-Shioya08, Sturm06-II}.

The Fukaya convergence combines the Gromov--Hausdorff
convergence of metric spaces and weak convergence of measures.
See Definition \ref{d:fukaya} for details.
Beware of the fact that,
unlike most definitions used in this paper,
the Fukaya convergence is sensitive to open sets of zero measure.
See the example in Section~\ref{sec:zero-limit-measure}
for an illustration of this subtle issue.

Actually in this paper we use another notion of metric-measure
approximation which is more suitable to the problem. 
It allows us to obtain nice estimates 
on the difference of eigenvalues of $\rho$-Laplacians 
of close metric-measure spaces.

\subsection*{Structure of the paper}
In Section \ref{sec:prelim} we introduce some notation
and collect basic facts about $\rho$-Laplacians.
In Section \ref{sec:examples} we discuss some examples.

In Section \ref{sec:wasserstein} we introduce a notion
of ``closeness'' of metric-measure spaces,
which we call $(\ep,\de)$-closeness.
Loosely speaking, metric-measure spaces $X$ and $Y$ 
are $(\ep,\de)$-close if $Y$ is a result 
of imprecise measurements in $X$ where distances
are measured with a small additive inaccuracy $\ep$ and volumes are
measured with a small relative inaccuracy~$\de$.
The formal definition is a combination of Gromov--Hausdorff distance
and a ``relative'' version of Prokhorov distance between measures.
The main results of Section \ref{sec:wasserstein}
characterize $(\ep,\de)$-closeness in terms
of measure transports and Wasserstein distances.

In Section~\ref{sec:stability} we prove Theorem \ref{t:stability}
which is a quantitative version of Theorem~\ref{t:second}.
It asserts that, if metric-measure spaces
$X$ and $Y$ are $(\ep,\de)$-close and satisfy certain conditions, 
then the lower parts of the spectra of their $\rho$-Laplacians are also close.
The conditions in Theorem \ref{t:stability}
can be thought of as ``discretized'' version of those from Theorem~\ref{t:second}.

In Section \ref{sec:TXY} 
we give a direct construction of a map between $L^2(X)$ and $L^2(Y)$
realizing the spectral closeness in Theorem \ref{t:stability}.
The results of Section~\ref{sec:TXY}
complement Theorem \ref{t:stability}
but they are not used in its proof.

In Section \ref{sec:weyl} we 
obtain Weyl-type estimates for the number of eigenvalues
in an interval $[0,c\rho^{-2}]$ where $c<1$ is a suitable constant.
See Theorems \ref{t:ess-spectrum} and \ref{t:many-eigenvalues}.
For a Riemannian manifold our estimates are of the same order
as those given by Weyl's asymptotic formula for
Beltrami--Laplace eigenvalues.
However our estimates are formulated in terms of packing numbers
rather than the dimension and total volume.

\subsection*{Acknowledgement}
We are grateful to Y.~Eliashberg, L.~Polterovich,
and a number of other mathematicians
for pointing out weaknesses in preliminary versions of the paper.
We did our best in fixing these issues. We are grateful to F.~Galvin
and the anonymous referee for bringing our attention
to some references.

\section{Preliminaries}
\label{sec:prelim}

In the sequel we abbreviate compact metric-measure spaces 
with finite Borel measures as ``mm-spaces''.
%Recall that all mm-spaces are assumed to be compact.
We use notation $d_X$ and $\mu_X$ for the metric and measure
of a mm-space $X$.
In some cases we consider semi-metrics,
that is, distances are allowed to be zero.
All definitions apply to semi-metrics with no change.

To simplify computations and incorporate
constructions from \cite{BIK14} into the present set-up,
we introduce weighted $\rho$-Laplacians.
Let $X=(X,d,\mu)$ be a mm-space and $\phi\co X\to\R_+$
a positive measurable function bounded away from 0 and $\infty$
on the support of $\mu$.
We call $\phi$ the \textit{normalizing function}.
We define a \emph{weighted $\rho$-Laplacian} 
$\Dr_\phi$ by
$$
\Dr_\phi u(x)= 
\frac{1}{\phi(x)} \int_{B_\rho(x)} \bigl(u(x)-u(y)\bigr)\, d\mu(y) .
$$
We regard $\Dr_\phi$ as an operator on $L^2(X)$.
Note that this operator does not change if one replaces
$X$ by the support of its measure.

Definition \ref{d:rho-laplacian} corresponds to
the normalizing function $\phi(x)=\rho^2\mu(B_\rho(x))$.
Due to compactness of $X$, 
this function is bounded away from 0 and $\infty$ 
on the support of $\mu$.

The operator $\Dr_\phi$ is self-adjoint on $L^2(X,\phi\mu)$
where $\phi\mu$ is the measure with density $\phi$ w.r.t.~$\mu$.
Indeed, for $u,v\in L^2(X)$ we have
\begin{align*}
 \langle \Dr_\phi u, v\rangle_{L^2(X,\phi\mu)} 
 &= \int_X \phi(x) v(x) \frac1{\phi(x)} \int_{B_\rho(x)} \bigl(u(x)-u(y)\bigr)\, d\mu(y) d\mu(x) \\
 &= \iint_{d(x,y)<\rho} v(x)\big(u(x)-u(y)\bigr)\, d\mu(y) d\mu(x)
\end{align*}
and the right-hand side is clearly symmetric in $u$ and $v$.
The corresponding Dirichlet energy form
$$
 D^\rho_X(u) = \langle\Dr_\phi u, u\rangle_{L^2(X,\phi\mu)}
$$
does not depend on $\phi$ and is given by
\be\label{e:dirichlet}
D^\rho_X(u) = \frac12 \iint_{d(x,y) <\rho} \bigl(u(x)- u(y)\bigr)^2 \, d\mu(x)  d\mu(y).
\ee
Note that the Dirichlet form is non-negative.

When dealing with $\rho$-Laplacians from Definition \ref{d:rho-laplacian},
that is when $\phi(x)=\rho^2\mu(B_\rho(x))$, 
we denote the measure $\phi\mu$ by $\mu^\rho$.
We denote the scalar product and norm in $L^2(X,\mu^\rho)$ 
by $\langle\cdot,\cdot\rangle_{X^\rho}$ and $\|\cdot\|_{X^\rho}$, resp.
That is,
\begin{gather}
\label{e:murho}
d\mu^\rho(x)/d\mu(x)=\rho^2\mu(B_\rho(x)) , \\
\label{e:L2rho}
\bra u, v \cet_{X^\rho}= \rho^2 \int_X \mu(B_\rho(x)) u(x) v(x)\, d\mu(x) , \\
\label{e:L2norm}
\|u\|^2_{X^\rho}= \rho^2 \int_X \mu(B_\rho(x)) u(x)^2\, d\mu(x) .
\end{gather}
The norm of $\Dr_X$ in $L^2(X,\mu^\rho)$ is bounded by $2\rho^{-2}$.
Indeed,
\begin{align*}
 D_X^\rho(u) 
 &= \frac12 \iint_{d(x,y)<\rho} (u(x)-u(y))^2 \,d\mu(x)d\mu(y) \\
 &\le \iint_{d(x,y)<\rho} (u(x)^2+u(y)^2) \,d\mu(x)d\mu(y) \\
 &= 2\int_X \mu(B_\rho(x)) u(x)^2 \,d\mu(x) = 2\rho^{-2} \|u\|^2_{X^\rho} .
\end{align*}
Thus the spectrum of $\Dr$ is contained in $[0,2\rho^{-2}]$.

The $\rho$-Laplacian $\Dr_X$ can be rewritten in the form $\Dr_X u = \rho^{-2} u - A u$ where
$$
 Au(x) = \frac{1}{\rho^2\mu(B_\rho(x))} \int_{B_\rho(x)} u(y)\, d\mu(y) .
$$
Observe that $A$ is an integral operator with a bounded kernel. 
Hence it is a compact operator on $L^2(X)$.
It follows that the essential spectrum of $\Dr_X$ is the same as that
of the operator $u\mapsto\rho^{-2} u$.
Namely it is empty if $L^2(X)$ is finite-dimensional and
the single point $\{\rho^{-2}\}$ otherwise.

A similar argument shows that the essential spectrum of $\Dr_\phi$
is located between the infimum and supremum of the function
$x\mapsto \mu(B_\rho(x))/\phi(x)$.

\begin{notation}
\label{n:lambda}
Let $\la_\infty=\la_\infty(X,\rho,\phi)$ be
the infimum of the essential spectrum of $\Dr_\phi$.
If there is no essential spectrum
(that is, if $L^2(X)$ is finite-dimensional),
we set $\la_\infty=\infty$.
For every $k\in\N$ we define $\la_k=\la_k(X,\rho,\phi)\in[0,+\infty]$ as follows.
First let $0=\la_1\le\la_2\le\dots$ be the eigenvalues of $\Dr_\phi$
(with multiplicities)
which are smaller than $\la_\infty$. 
If there are only finitely many of such eigenvalues, 
we set
$\la_k=\la_\infty$ for all larger values of~$k$.

We abuse the language and refer to $\la_k(X,\rho,\phi)$ as
the \emph{$k$-th eigenvalue} of $\Dr_\phi$ even though
it may be equal to $\la_\infty$.

For the $\rho$-Laplacian $\Dr_X$ %introduced in Definition \ref{d:rho-laplacian}
we drop $\phi$ from the notation and denote the $k$-th eigenvalue by $\la_k(X,\rho)$.
\end{notation}

By the standard Min-Max Theorem, for every $k\in\N$ we have
\be \label{e:minmax-phi}
\la_k(X,\rho,\phi) =\inf_{H^k} \sup_{u \in H^k\setminus\{0\}}
\left(\frac{ D^\rho_X(u)}{\|u\|_{L^2(X,\phi\mu)}^2}\right)
\ee
and in particular
\be \label{e:minmax}
\la_k(X,\rho) =\inf_{H^k} \sup_{u \in H^k\setminus\{0\}}
\left(\frac{ D^\rho_X(u)}{\|u\|_{X^\rho}^2}\right)
\ee
where the infima are taken over all $k$-dimensional
subspaces $H^k$ of $L^2(X)$.
This formula is our main tool for eigenvalue estimates.
We emphasize that it holds in both cases $\la_k<\la_\infty$ and $\la_k=\la_\infty$.

As an immediate application, we observe that the eigenvalues are
stable with respect to small relative changes of the normalizing
function and measure. 
If $\mu_1$ and $\mu_2$ are measures on $X$ satisfying
$a\mu_1\le\mu_2\le b\mu_1$ where $a$ and $b$ are positive constants,
then for the corresponding mm-spaces $X_1=(X,d,\mu_1)$ and $X_2=(X,d,\mu_2)$
we have
\be\label{e:la-change-mu}
 \frac{a^2}{b^2}
 \le \frac{\la_k(X_2,\rho)}{\la_k(X_1,\rho)}
 \le \frac{b^2}{a^2}
\ee
for every $k\in\N$.
This follows from \eqref{e:minmax} and
the inequalities 
\begin{gather*}
a^2\le D^\rho_{X_2}(u)/D^\rho_{X_1}(u)\le b^2 , \\
a^2\le \|u\|^2_{X_2^\rho}/\|u\|^2_{X_1^\rho}\le b^2 ,
\end{gather*}
which hold for all $u\in L^2(X)$.
Note that multiplying the measure by a constant does not
change the $\rho$-Laplacian.

For any two normalizing 
functions $\phi_1$ and $\phi_2$ \eqref{e:minmax-phi} implies that
\be\label{e:la-change-phi}
 \inf_{x\in X}\frac{\phi_1(x)}{\phi_2(x)}
 \le \frac{\la_k(X,\rho,\phi_2)}{\la_k(X,\rho,\phi_1)} 
 \le \sup_{x\in X}\frac{\phi_1(x)}{\phi_2(x)}.
\ee
For nice spaces such as Riemannian manifolds, the volume of small $\rho$-balls
is almost constant as a function of the center of the ball.
In such cases one can consider a weighted $\rho$-Laplacian with a constant
normalizing function and conclude that its spectrum is close to that of $\Dr_X$
(cf.\ Section \ref{subsec:riem}).

\section{Examples}
\label{sec:examples}

\subsection{Riemannian manifolds}
\label{subsec:riem}
The paper \cite{BIK14} deals with the case 
of $X$ being a closed Riemannian $n$-manifold $M$
or a discrete approximation of~$M$.
In the terminology of Section \ref{sec:prelim},
the object studied in \cite{BIK14} is a
weighted $\rho$-Laplacian with constant normalization function
$\phi(x)=\phi_\rho:=\frac{\nu_n\rho^{n+2}}{2n+4}$.
Here $\nu_n$ is the volume of the unit ball in $\R^n$.
As $\rho\to 0$, we have $\mu(B_\rho(x))\sim\nu_n\rho^n$
uniformly in $x\in M$.
Hence $\phi_\rho/\rho^2\mu(B_\rho(x))\to \frac1{2n+4}$.
Thus, by \eqref{e:la-change-phi}, the spectrum of $\Dr_X$
is close to that of $\Dr_\phi$ multiplied by $\frac1{2n+4}$.

The results of \cite{LM} imply that
the spectrum of $\Dr_X$, where $X$ is a Riemannian manifold, converges as $\rho\to 0$ to 
the Beltrami--Laplace spectrum multiplied by $\frac1{2n+4}$.
In \cite{BIK14} similar convergence is shown for graph Laplacians
arising from discrete approximations of a Riemannian manifold.
Theorem \ref{t:second} generalizes this result.

%This fact and the results of \cite{BIK14}
%imply that the spectrum of $\Dr_X$ converges as $\rho\to 0$ to 
%the Beltrami--Laplace spectrum of $M$ multiplied by $\frac1{2n+4}$.

Note that the scalar product
$\langle\cdot,\cdot\rangle_{X^\rho}$ and Dirichlet form $D^\rho_X$
tend to 0 as $\rho\to 0$.
To make them comparable with the Riemannian counterparts
one multiplies them by $\rho^{-n-2}$.

\subsection{Finsler manifolds}
Let $X$ be a closed Finsler manifold $M$ with smooth
and quadratically convex Finsler structure.
First recall that there are many reasonable notions of volume
for Finsler manifolds, see e.g.~\cite{Thompson}. 
Different volume definitions obviously lead
to different $\rho$-Laplacians. 
Still the issues we study in this paper are
not sensitive to the choice of volume.

Consider a tangent space $V=T_xM$ at a point $x\in M$.
It is equipped with a norm $\|\cdot\|=\|\cdot\|_x$
which is the restriction of the Finsler structure.
Let $B$ be the unit ball of $\|\cdot\|$.
There is a unique ellipsoid $E\subset V$
such that for every quadratic form the integrals
of it over $B$ and $E$ coincide.
Rescaling $E$ by a suitable factor
(depending on the chosen Finsler volume definition)
and regarding the resulting ellipsoid as the unit
ball of a Euclidean metric,
one obtains a Euclidean metric $|\cdot|$ on $V$
whose $\rho$-Laplacian coincides with that of $\|\cdot\|$
on the set of quadratic forms on $V$.

Applying this construction to every $x\in M$ one obtains
a family of quadratic forms on the tangent spaces
thus defining a Riemannian metric on~$M$. 
It is very likely that the spectra of $\rho$-Laplacians
of the Finsler metric converge as $\rho\to 0$
to the Beltrami--Laplace spectrum of this Riemannian metric.

\subsection{Piecewise Riemannian polyhedra}
Let $X$ be a finite simplicial complex whose faces
are equipped with Riemannian metrics which
agree on the intersections of faces.
First assume that $X$ is dimensionally homogeneous
of dimension $n$.
In this case one can mostly follow the analysis
of the Riemannian case.
The difference is that,
due to boundary terms, the Riemannian Dirichlet energy
$\int_X\|du\|^2$ is not always equal
to $\langle \Delta u,u\rangle$ where $\Delta$
is the Beltrami--Laplace operator.
They are however equal on the subspace of functions
satisfying Kirchhoff's condition.
This condition says that, at every point in
every $(n-1)$-dimensional face, the sum of normal
derivatives in the adjacent $n$-dimensional faces
equals~0.
For instance, if $X$ is a manifold with boundary,
this boils down to the Neumann boundary condition.
It is plausible that the spectra of $\Dr_X$
converge as $\rho\to 0$ to the spectrum
of Beltrami--Laplace operator with Kirchhoff's condition.

The problem can also be studied for polyhedral spaces
with varying local dimension. 
For instance, consider a two-dimensional membrane
with a one-dimensional string attached.
One can equip this space with a measure which
is one-dimensional on the string and 
two-dimensional on the membrane.
Unlike the previous examples,
we cannot apply our results to this example
because it does not satisfy the doubling condition.
It is violated near the point where the string
is attached to the membrane.
It is rather intriguing if 
Theorem~\ref{t:second}
still holds in this situation.

\subsection{Disappearing measure support}
\label{sec:zero-limit-measure}
The following example shows that one has
to be careful with limits of mm-spaces if
the limit measure does not have full support.

Let $X$ be a disjoint union of two compact Riemannian manifolds
$M_1$ and $M_2$. 
Define a distance $d$ on $X$ as follows: In each component
it is the standard Riemannian distance, and the distance between the components
is a large constant.
For each $t\ge 0$ define a measure $\mu_t$ on $X$
by $\mu_t = \vol_{M_1}+t\vol_{M_2}$
where $\vol_{M_i}$, $i=1,2$, are Riemannian volumes
on the components. 
Then $\mu_t$ weakly converge to $\mu_0=\vol_{M_1}$ as $t\to 0$.

For every $t>0$, locally constant functions form a two-dimensional
subspace in $L^2(X)$.
Hence the zero eigenvalue of $\Dr_{X_t}$ has multiplicity~2.
Thus $\la_2(X_t,\rho)=0$ for all $t>0$.
On the other hand, $\la_2(X_0,\rho)>0$ since
the $\rho$-Laplacian of $X_0$ is the same as
that of the component $M_1$.
Thus $\la_1(M_0)\ne\lim_{t\to0}\la_1(M_t)$.

A formal reason for the failure of Theorem \ref{t:second}
in this example
is that the Bishop--Gromov condition \eqref{e:bishop-gromov}
is not satisfied.
Another issue is that $d$ is not a length metric.
The latter can be fixed by connecting $M_1$ and $M_2$
by a long segment and taking the induced intrinsic metric.

\section{Relative Prokhorov and Wasserstein closeness}
\label{sec:wasserstein}

This section is devoted to the notion of $(\ep,\de)$-closeness
that we use in our spectrum stability results.
This notion is introduced in Definition \ref{d:mm-close}.
The main results of this section are
Proposition \ref{p:prokhorov-vs-wasserstein} 
and Corollary \ref{c:mm-wasserstein}
which characterize $(\ep,\de)$-closeness 
in terms of measure transport.

We use the following notation.
For a metric space $(X,d)$ and a set $A\subset X$ and $r\ge 0$, 
we denote by $A^r$ the closed $r$-neighborhood of $A$.
That is, $A^r=\{x\in X:\, d(x,A)\le r\}$.

\begin{definition}[relative Prokhorov closeness] \label{d:relative-prokhorov}
Let $Z$ be a metric space, $\mu_1$, $\mu_2$ finite Borel measures on $Z$,
and $\ep,\de\ge 0$.
We say that $\mu_1$ and $\mu_2$
are \emph{relative $(\e,\delta)$-close} if for every Borel set $A \subset Z$,
$$
e^\delta \mu_1(A^\ep) \geq \mu_2(A)
\quad\text{and}\quad
e^\delta \mu_2(A^\ep) \geq \mu_1(A)
.
$$
\end{definition}

This definition is similar to that of Prokhorov's distance 
on the space of measures \cite{Prokhorov}.
The crucial difference is that we use multiplicative
corrections rather than additive ones.

The topology arising from Definition \ref{d:relative-prokhorov}
is stronger than the standard weak topology on the space of measures on $X$.
If however we restrict ourselves to the subspace of measures
with full support, then the topologies are the same.

We combine Definition \ref{d:relative-prokhorov}
with the notion of Gromov--Hausdorff (GH) distance
analogously to the definition of Gromov--Wasserstein distances
as in e.g.\ \cite{Sturm06-I}.
Recall that metric spaces $(X,d_X)$ and $(Y,d_Y)$ are $\ep$-close
in the GH distance iff
the disjoint union $X\sqcup Y$ can be equipped with a (semi-)metric $d$
extending $d_X$ and $d_Y$ and such that $X$ and $Y$
are contained in the $\ep$-neighborhoods of each other
with respect to~$d$.
For discussion of GH distance see e.g.~\cite{BBI}.

\begin{definition}
\label{d:mm-close}
Let $\ep,\de\ge 0$.
We say that mm-spaces $X=(X,d_X,\mu_X)$ and $Y=(Y,d_Y,\mu_Y)$
are \emph{mm-relative $(\ep,\de)$-close} if there exists 
a semi-metric $d$ on $X\sqcup Y$
extending $d_X$ and $d_Y$ and such that $\mu_X$ and $\mu_Y$
are relative $(\ep,\de)$-close in $(X\sqcup Y,d)$ 
in the sense of Definition \ref{d:relative-prokhorov}.

In the sequel we abbreviate ``mm-relative $(\ep,\de)$-close'' to just $(\ep,\de)$-close.
\end{definition}

Observe that, if the measures have full support, 
then $(\ep,\delta)$-closeness of mm-spaces
$(X,d_X,\mu_X)$ and $(Y,d_Y,\mu_Y)$ implies that
the metric spaces $(X,d_X)$ and $(Y,d_Y)$ are 
$\ep$-close in the sense of Gromov--Hausdorff distance.

The following example motivated Definition \ref{d:mm-close}
as well as a number of other definitions and assumptions in this paper.

\begin{example}[discretization, cf.~\cite{BIK14}]
\label{x:discretization}
Let $X$ be a mm-space and $Y$ a finite $\ep$-net in $X$.
We can associate a small basin in $X$ to every point of $Y$
and move all measure from each basin to its point.
More precisely, there is a partition of $X$ into measurable
sets $V_y$, $y\in Y$, such that each $V_y$ is contained
in the ball $B_\ep(y)$. We assign the weight equal to $\mu_X(V_y)$
to each $y$ thus defining a measure $\mu_Y$ on $Y$.
If we regard $\mu_Y$ as a measure on $X$, 
then it is relative $(\ep,0)$-close to $\mu_X$
in the sense of Definition \ref{d:relative-prokhorov}.
We can also regard $Y$ equipped with $\mu_Y$
as a separate mm-space. 
Then it is $(\ep,0)$-close to $X$ in the sense of Definition \ref{d:mm-close}.

Now consider a result of some ``measurement errors'' in~$Y$.
Namely, let $Y'=(Y,d_Y',\mu_Y')$ be a mm-space with the same point set $Y$ 
and such that $|d_Y'-d_Y^{}|<\ep$ and $e^{-\de}\le \mu_Y'/\mu_Y^{}\le e^\de$.
Then $Y'$ is $(2\ep,\de)$-close to~$X$.
\end{example}

Now we show that Fukaya convergence (used in Theorem \ref{t:second})
implies convergence with respect to $(\ep,\de)$-closeness,
provided that the limit measure has full support.
Recall that the Fukaya convergence is defined as follows.

\begin{definition}[cf.~{\cite[(0.2)]{Fu}}]\label{d:fukaya}
A sequence $X_n=(X_n,d_n,\mu_n)$ of mm-spaces
converges to a mm-space $X=(X,d,\mu)$ in the sense of Fukaya
if the following holds.
There exist 
a sequence $\sigma_n\to 0$ of positive numbers
and a sequence $f_n\co X_n\to X$ of measurable maps
such that
\begin{enumerate}
\item $f_n(X_n)$ is an $\sigma_n$-net in $X$;
\item $|d(f_n(x),f_n(y))-d_n(x,y)| < \sigma_n$ for all $x,y\in X_n$;
\item the  push-forward measures $(f_n)^{}_*\mu_n$ weakly converge to~$\mu$.
\end{enumerate}
\end{definition}

\begin{proposition}
\label{p:fukaya}
Let $X_n$ converge to $X$ in the sense Fukaya
and assume that $\mu_X$ has full support.
Then there exist sequences $\ep_n,\de_n\to 0$
such that $X_n$ is $(\ep_n,\de_n)$-close to~$X$
for all $n$.
\end{proposition}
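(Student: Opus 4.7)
The plan is to factor the comparison of $\mu_X$ and $\mu_n$ through the intermediate measure $(f_n)_*\mu_n$ on $X$. First I construct a semi-metric $d$ on $X_n\sqcup X$ extending $d_n$ and $d_X$ by setting
$$
d(x,y) = \sigma_n + \inf_{x'\in X_n}\bigl[\,d_n(x,x') + d_X(f_n(x'),y)\,\bigr]
$$
for $x\in X_n$ and $y\in X$ (and symmetrically). The triangle inequality in the mixed cases is routine; the only nontrivial check involves two points of $X_n$ and one of $X$, where the $\sigma_n$ slack absorbs the discrepancy between $d_n$ and the $f_n$-pullback of $d_X$ guaranteed by property~(2) of Definition~\ref{d:fukaya}. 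Since $d(x,f_n(x))\le\sigma_n$, the set-theoretic inclusions $f_n^{-1}(A\cap X)\subset A^{\sigma_n}$ and $A\cap X_n\subset f_n^{-1}(A^{\sigma_n}\cap X)$, valid for every Borel $A\subset X_n\sqcup X$, immediately yield that $\mu_n$ and $(f_n)_*\mu_n$ are relative $(\sigma_n,0)$-close on $(X_n\sqcup X,d)$.

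It then suffices to show that $(f_n)_*\mu_n$ is relative $(\ep_n',\de_n')$-close to $\mu_X$ on $X$ for some $\ep_n',\de_n'\to 0$, since relative closeness satisfies the triangle inequality
$$
\nu_1 \text{ is }(\ep_1,\de_1)\text{-close to }\nu_2,\ \nu_2\text{ is }(\ep_2,\de_2)\text{-close to }\nu_3 \Longrightarrow \nu_1\text{ is }(\ep_1+\ep_2,\de_1+\de_2)\text{-close to }\nu_3,
$$
an immediate consequence of $(A^{\ep_2})^{\ep_1}\subset A^{\ep_1+\ep_2}$. Fix $\ep>0$ and choose a finite $\ep/4$-net $\{z_1,\dots,z_N\}$ in the compact space $X$. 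For each $i$ pick $r_i\in[\ep/4,\ep/2]$ outside the at-most-countable set $\{r:\mu_X(\partial B(z_i,r))>0\}$, and put $U_i=B(z_i,r_i)$; then every set in the finite Boolean algebra $\mathcal A$ generated by $\{U_i\}$ is a continuity set of $\mu_X$. For any Borel $A\subset X$, the union $U_{J(A)}=\bigcup\{U_i:A\cap U_i\ne\emptyset\}$ lies in $\mathcal A$ and satisfies $A\subset U_{J(A)}\subset A^\ep$, so it suffices to compare the two measures on $\mathcal A$. Since $\mathcal A$ is finite and each of its members is a continuity set, the Portmanteau theorem gives $(f_n)_*\mu_n(U)\to \mu_X(U)$ uniformly over $U\in\mathcal A$.

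The main obstacle is converting this additive convergence into the multiplicative estimate demanded by Definition~\ref{d:relative-prokhorov}, and this is exactly where full support of $\mu_X$ enters. By compactness of $X$ together with full support, $c(\ep):=\inf_{z\in X}\mu_X(B(z,\ep/4))>0$, so every nonempty $U\in\mathcal A$ satisfies $\mu_X(U)\ge c(\ep)$. Hence a uniform additive error $\eta$ on $\mathcal A$ produces multiplicative error at most $\eta/c(\ep)$, which can be made smaller than any prescribed $\de$ once $n$ is sufficiently large. Choosing $\ep_n\to 0$ slowly enough that the $\ep_n$-dependent uniform Portmanteau estimate still holds at stage $n$ yields the required $(\ep_n',\de_n')\to 0$; composing with the $(\sigma_n,0)$-closeness from the first paragraph gives $(\ep_n,\de_n)=(\sigma_n+\ep_n',\de_n')\to 0$ as desired.
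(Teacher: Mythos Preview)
Your proof is correct, with one slip worth flagging: the claim that every nonempty $U\in\mathcal A$ satisfies $\mu_X(U)\ge c(\ep)$ is false for the full Boolean algebra (an atom such as $U_1\setminus\bigcup_{i\ge 2}U_i$ can have arbitrarily small $\mu_X$-measure), but it \emph{does} hold for nonempty unions of the $U_i$, since any such union contains some $U_i\supset B(z_i,\ep/4)$. The sets $U_{J(A)}$ you actually use are precisely of this form, so the argument goes through unchanged once you restrict from $\mathcal A$ to the finite sublattice of unions.

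The overall strategy matches the paper's: build a metric on $X_n\sqcup X$ with $d(x,f_n(x))\le\sigma_n$, then convert the additive closeness coming from weak convergence into the multiplicative bound of Definition~\ref{d:relative-prokhorov} via the uniform lower bound $\inf_x\mu_X(B_{\ep'}(x))>0$ afforded by full support. Your packaging is somewhat different and arguably cleaner: you factor the comparison through the pushforward $(f_n)_*\mu_n$ and invoke the triangle inequality for relative $(\ep,\de)$-closeness, and you handle the weak-convergence step via Portmanteau on a finite family of continuity sets, whereas the paper works directly with all Borel sets on $X\sqcup X_n$, uses the Prokhorov metric, and carries out a more hands-on ball argument to absorb the additive error $\nu$. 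Your modular decomposition makes the role of each hypothesis more transparent; the paper's version avoids the final diagonalization (``choosing $\ep_n\to 0$ slowly enough'') by fixing $\ep,\de$ and showing the estimate eventually holds.
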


\begin{proof}
Let $X$, $X_n$, $\sigma_n$, $f_n$ be as above.
The existence of $f_n$ implies
that $X_n$ is $2\sigma_n$-close to $X$
in the GH distance, 
see e.g.\ \cite[Cor.~7.3.28]{BBI}.
Moreover there is a metric $d_n'$
on the disjoint union $X\sqcup X_n$
such that $d_n'$ extends $d\cup d_n$ and
\be\label{e:fukaya3}
d_n'(x,f_n(x)) \le \sigma_n
\ee
for all $x\in X_n$.

It suffices to prove that for every $\ep,\de>0$
the spaces $X_n$ eventually get $(\ep,\de)$-close to~$X$.
Fix $\ep$ and $\de$.
Let $\nu=\nu(\ep,\de)>0$ be so small that
$$
 (e^\de-1)(\mu(B_{\ep/3}(x))-\nu) \ge \nu
$$
for all $x\in X$. 
Such $\nu$ exists since the measures 
of $(\ep/3)$-balls in $X$ are bounded away from~0.
This is where we use the assumption that $\mu$ has full support.

Since $(f_n)_*^{}\mu_n$ weakly converges to $\mu$,
the Prokhorov distance between $(f_n)_*^{}\mu_n$ and $\mu$
tends to~0, see \cite{Prokhorov}. 
This implies that for all sufficiently large~$n$ we have
\begin{gather}
\label{e:fukaya1}
 \mu(A^{\ep/3}) + \nu >  \mu_n(f_n^{-1}(A)) , \\
\label{e:fukaya2}
 \mu_n(f_n^{-1}(A^{\ep/3})) + \nu > \mu(A)
\end{gather}
for every Borel set $A\subset X$.

Now consider the disjoint union $Z_n=X\sqcup X_n$
equipped with the metric $d_n'$.
The measures $\mu$ and $\mu_n$ can be regarded
as measures on~$Z_n$.
If $\sigma_n<\ep/3$ then by \eqref{e:fukaya3} we have
$f_n^{-1}(A^{\ep/3}) \subset A^\ep$ for all $A\subset X$
and $(f_n(B))^{\ep/3}\subset B^\ep$ for all $B\subset X_n$.
Here the neighborhoods are taken in $(Z_n,d_n')$.
These inclusions along with \eqref{e:fukaya1} and \eqref{e:fukaya2}
imply that
\begin{gather}
\label{e:fukaya4}
 \mu(A^\ep)+\nu > \mu_n(A), \\
\label{e:fukaya5}
 \mu_n(A^\ep)+\nu > \mu(A)
\end{gather}
for every Borel set $A\subset Z_n$ provided that $n$ is large enough.

Let $A\subset Z_n$ be a nonempty set and $\sigma_n<\ep/6$. 
Then there exists $x\in X$ such that
$A^\ep$ contains the ball $B_{5\ep/6}(x)$.
This fact is trivial if $A\cap X\ne\emptyset$,
otherwise it follows from \eqref{e:fukaya3}.
Let $D=B_{\ep/3}(x)\cap X$. 
By \eqref{e:fukaya3} we have
$$
 f_n^{-1}(D^{\ep/3})\subset f_n^{-1}(B_{2\ep/3}(x))\subset B_{5\ep/6}(x) \subset A^\ep .
$$
Therefore
\be\label{e:fukaya7}
 \mu_n(A^\ep) \ge \mu_n(f_n^{-1}(D^{\ep/3})) > \mu(D)-\nu 
\ee
by \eqref{e:fukaya2}.
Since $D$ is an $(\ep/3)$-ball in $X$, 
by the definition of $\nu$ we have
$$
 (e^\de-1)(\mu(D)-\nu) \ge \nu .
$$
This and \eqref{e:fukaya7} imply that $(e^\de-1)\mu_n(A^\ep)\ge\nu$
and therefore
\be\label{e:fukaya8}
 e^\de \mu_n(A^\ep) \ge \mu_n(A^\ep)+\nu > \mu(A)
\ee
by \eqref{e:fukaya5}.
Similarly, since $D\subset A^\ep$, we have $\mu(A^\ep)\ge\mu(D)$. 
This inequality and \eqref{e:fukaya4} imply that
\be\label{e:fukaya9}
 e^\de \mu(A^\ep) \ge \mu_n(A)
\ee
in the same way as \eqref{e:fukaya7} and \eqref{e:fukaya5}
imply \eqref{e:fukaya8}.
Now \eqref{e:fukaya8} and \eqref{e:fukaya9} imply
that $X_n$ and $X$ are $(\ep,\de)$-close.
The proposition follows.
\end{proof}

Now we reformulate $(\ep,\de)$-closeness in terms of measure transport.
Recall that a \textit{measure coupling} 
(or a \emph{measure transportation plan}) between
measure spaces $(X,\mu_X)$ and $(Y,\mu_Y)$
is a measure $\ga$ on $X\times Y$ whose marginals
on $X$ and $Y$ coincide with $\mu_X$ and $\mu_Y$, resp.
The \emph{marginals} are push-forwards of $\ga$ by
the coordinate projections from $X\times Y$
to the factors.
Obviously a measure coupling exists if and only if
$\mu_X(X)=\mu_Y(Y)$.

In our set-up $X$ and $Y$
are compact subsets of a metric space $(Z,d)$
and all measures are finite Borel.
In this case $\mu_X$ and $\mu_Y$ can be regarded as measures on $Z$
and, assuming that $\mu_X(X)=\mu_Y(Y)$, one defines the $L^\infty$-Wasserstein
distance $W_\infty(\mu_X,\mu_Y)$
as the minimum of all $\ep\ge 0$ such that there exists
a coupling $\ga$ between $\mu_X$ and $\mu_Y$ such that
$d(x,y)\le\ep$ for $\ga$-almost all pairs $(x,y)\in X\times Y$.
(The minimum exists due to the weak compactness of the space of measures.)
For discussion of Wasserstein distances, see e.g.~\cite{Villani}.

\begin{proposition}[approximate coupling]
\label{p:prokhorov-vs-wasserstein}
Let $Z$ be a compact metric space and $\mu_X$, $\mu_Y$
finite Borel measures on $Z$.
Then the following two conditions are equivalent:
\begin{enumerate}
 \item[(i)] 
$\mu_X$ and $\mu_Y$ are relative $(\ep,\de)$-close
(see Definition \ref{d:relative-prokhorov});
 \item[(ii)]
There exist measures $\tilde\mu_X$ and $\tilde\mu_Y$ on $Z$ such that
$$
e^{-\delta}\mu_X \le \tilde\mu_X \le \mu_X, 
\qquad 
e^{-\delta}\mu_Y \le \tilde\mu_Y \le \mu_Y
$$
and
$W_\infty(\tilde\mu_X,\tilde\mu_Y) \le \ep $.
\end{enumerate}
In particular, $\mu_X$ and $\mu_Y$ are relative $(\ep,0)$-close
iff $W_\infty(\mu_X,\mu_Y) \le \ep $
\end{proposition}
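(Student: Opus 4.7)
The forward direction $(ii)\Rightarrow(i)$ is a direct computation. Given a coupling $\gamma$ on $Z\times Z$ realizing $W_\infty(\tilde\mu_X,\tilde\mu_Y)\le\ep$, the inclusion $\supp\gamma\subseteq\{d\le\ep\}$ gives, for every Borel $A\subseteq Z$,
$\tilde\mu_Y(A)=\gamma(Z\times A)\le\gamma(A^\ep\times Z)=\tilde\mu_X(A^\ep)$.
Combined with $\mu_Y\le e^\de\tilde\mu_Y$ and $\tilde\mu_X\le\mu_X$ this yields $\mu_Y(A)\le e^\de\mu_X(A^\ep)$, and the symmetric inequality follows by swapping $X$ and~$Y$.

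For $(i)\Rightarrow(ii)$ I would recast (ii) as the existence of a nonnegative Borel measure $\gamma$ on $Z\times Z$ with $\supp\gamma\subseteq\{d\le\ep\}$ whose marginals satisfy $e^{-\de}\mu_X\le\pi_X(\gamma)\le\mu_X$ and $e^{-\de}\mu_Y\le\pi_Y(\gamma)\le\mu_Y$ as measures; then $\tilde\mu_X:=\pi_X(\gamma)$ and $\tilde\mu_Y:=\pi_Y(\gamma)$ witness (ii). The strategy is to handle the finite case first and then pass to a weak limit.

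On a finite set $Z$, existence of $\gamma$ is a flow-feasibility problem in the bipartite network with source $s$, sink $t$, arcs $s\to x$ of capacity $[e^{-\de}\mu_X(x),\mu_X(x)]$, arcs $y\to t$ of capacity $[e^{-\de}\mu_Y(y),\mu_Y(y)]$, and uncapacitated arcs $x\to y$ whenever $d(x,y)\le\ep$. By Hoffman's circulation theorem feasibility reduces to cut inequalities. Enumerating bipartitions by the sides of $s$ and $t$, the nontrivial cases carry a Hall-type restriction $A^\ep\subseteq B$ for finiteness of the cut upper bound, and are tightest at $B=A^\ep$, reducing respectively to
\[
\mu_Y(A^\ep)\ge e^{-\de}\mu_X(A)\quad\text{and}\quad\mu_X(Z\setminus A)\ge e^{-\de}\mu_Y(Z\setminus A^\ep)
\]
for arbitrary $A\subseteq Z$. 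The first is one half of~(i); the second follows from the other half of~(i) applied to $C:=Z\setminus A^\ep$, because $C^\ep\subseteq Z\setminus A$ (if $x\in C^\ep\cap A$, some $y\in C$ with $d(x,y)\le\ep$ would lie in $A^\ep$, contradicting $y\in Z\setminus A^\ep$).

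For general compact $Z$ one approximates $\mu_X,\mu_Y$ by their pushforwards on $\tau$-nets, applies the finite result for each $\tau$, and extracts a weak limit of the resulting couplings as $\tau\to 0$; the closed support condition and the two-sided marginal bounds both pass to the limit by weak compactness on $Z\times Z$. The $\de=0$ specialization forces $\tilde\mu_X=\mu_X$ and $\tilde\mu_Y=\mu_Y$, so the final sentence of the proposition follows immediately. The main technical obstacle is preserving the marginal bounds exactly through the discretization and limit rather than inflating them by an amount tending to $0$; this is handled by a diagonal argument together with the fact that the net pushforwards can be arranged to converge monotonically to $\mu_X,\mu_Y$.
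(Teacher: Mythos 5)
Your proof is correct, and the forward direction (ii)$\Rightarrow$(i) coincides with the paper's argument. For (i)$\Rightarrow$(ii), however, you take a genuinely different route in the key finite-dimensional step. The paper reduces the discrete case to integer-valued measures by rational approximation, splits each atom into unit atoms coloured red/green, and invokes the Dulmage--Mendelsohn extension of Hall's marriage theorem to produce a matching covering the red points, which yields the coupling; it actually proves a more general statement (Lemma \ref{l:Hall}) for an arbitrary closed ``edge set'' $E\subset X\times Y$ and arbitrary minorants $\mu'_X\le\mu_X$, $\mu'_Y\le\mu_Y$. You instead phrase the finite case as a feasible-circulation problem with lower and upper arc capacities and apply Hoffman's theorem; your enumeration of the cuts is correct, the two nontrivial families of cut inequalities are exactly the ones you state, and your derivation of the second from condition (i) via $C=Z\setminus A^\ep$ and $C^\ep\subset Z\setminus A$ is right. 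This buys you a cleaner reduction (no rationality or atom-splitting step, and the LP feasibility argument works directly with real weights), at the cost of importing a slightly heavier combinatorial tool; it would also prove the paper's more general lemma verbatim, since Hoffman's cut conditions there are precisely \eqref{1.23.12}. The passage to the general compact case is the same discretize-and-take-weak-limits scheme as in the paper (your $\tau$-net pushforwards play the role of the partitions $\Omega^i_X,\Omega^j_Y$, and your enlarged relation $d\le\ep+2\tau$ plays the role of $E_n$). One remark on your final paragraph: the ``monotone convergence of net pushforwards'' is neither needed nor really available; the two-sided marginal bounds survive the weak limit simply because an inequality $\nu_n\le\mu_n$ between measures is preserved under weak convergence (test against nonnegative continuous functions and use regularity), and the support condition tightens from $\{d\le\ep+2\tau_n\}$ to the closed set $\{d\le\ep\}$ by the portmanteau theorem, which is exactly how the paper handles its $E_n$.
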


For comparison of mm-spaces we have the following
corollary, which avoids explicit mentioning of metrics
on disjoint unions.

\begin{corollary}
\label{c:mm-wasserstein}
Let $X,Y$ be compact mm-spaces and $\ep,\de\ge 0$.
Then the following two conditions are equivalent:
\begin{enumerate}
 \item[(i)] 
$X$ and $Y$ are mm-relative $(\ep,\de)$-close
(see Definition \ref{d:mm-close}).
 \item[(ii)]
There exist measures $\tilde\mu_X$ on $X$ and $\tilde\mu_Y$ on $Y$
such that
\be\label{e:mm-wasserstein1}
e^{-\delta}\mu_X \le \tilde\mu_X \le \mu_X, 
\qquad 
e^{-\delta}\mu_Y \le \tilde\mu_Y \le \mu_Y
\ee
and a measure coupling $\ga$ between
$(X,\tilde\mu_X)$ and $(Y,\tilde\mu_Y)$
such that
\be\label{e:mm-wasserstein2}
 |d_X(x_1,x_2)-d_Y(y_1,y_2)| \le 2\ep
\ee
for all pairs $(x_1,y_1),(x_2,y_2)\in\supp(\ga)$.
\end{enumerate}
\end{corollary}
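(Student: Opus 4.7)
The plan is to reduce the corollary to Proposition \ref{p:prokhorov-vs-wasserstein} applied to the disjoint union $Z = X\sqcup Y$, with a little bookkeeping to translate between semi-metrics on $Z$ and couplings on $X\times Y$.

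For the direction (i)$\Rightarrow$(ii), I would start from a semi-metric $d$ on $Z=X\sqcup Y$ extending $d_X$ and $d_Y$ such that $\mu_X$ and $\mu_Y$ are relative $(\ep,\de)$-close in $(Z,d)$, as given by Definition \ref{d:mm-close}. Proposition \ref{p:prokhorov-vs-wasserstein} then produces $\tilde\mu_X,\tilde\mu_Y$ on $Z$ satisfying \eqref{e:mm-wasserstein1} together with a measure coupling $\ga$ on $Z\times Z$ whose $d$-cost is at most $\ep$ almost surely. Since $\tilde\mu_X\le\mu_X$ and $\tilde\mu_Y\le\mu_Y$, the marginals of $\ga$ are supported on $X$ and $Y$ respectively, so $\ga$ can be viewed as a measure on $X\times Y$. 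To verify \eqref{e:mm-wasserstein2}, I take $(x_1,y_1),(x_2,y_2)\in\supp(\ga)$, note that $d(x_i,y_i)\le\ep$ by the Wasserstein bound, and apply the triangle inequality in $(Z,d)$ together with $d|_X=d_X$, $d|_Y=d_Y$ to get $|d_X(x_1,x_2)-d_Y(y_1,y_2)|\le 2\ep$.

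The direction (ii)$\Rightarrow$(i) is the main obstacle, since I must \emph{build} a semi-metric on the disjoint union out of the coupling. I propose to set $d|_X=d_X$, $d|_Y=d_Y$, and for $x\in X$, $y\in Y$,
\[
 d(x,y) = \inf\bigl\{\,d_X(x,x_0)+\ep+d_Y(y_0,y)\,:\,(x_0,y_0)\in\supp(\ga)\,\bigr\}.
\]
Intuitively, this makes every pair $(x_0,y_0)\in\supp(\ga)$ a ``shortcut'' of length $\ep$ between $X$ and $Y$. The triangle inequalities involving at most one crossing from $X$ to $Y$ are straightforward from the definition, but the delicate case is $d(x_1,y)+d(y,x_2)\ge d_X(x_1,x_2)$, where two pairs $(x_0,y_0),(x_0',y_0')\in\supp(\ga)$ are used and one obtains a lower bound of the form
\[
 d_X(x_1,x_0)+2\ep+d_Y(y_0,y_0')+d_X(x_0',x_2).
\]
Precisely here hypothesis \eqref{e:mm-wasserstein2} enters, replacing $d_Y(y_0,y_0')$ by $d_X(x_0,x_0')-2\ep$ and absorbing the $2\ep$, yielding $d_X(x_1,x_2)$ after one more triangle inequality in $X$. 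This is the only step where the factor $2\ep$ (rather than $\ep$) in \eqref{e:mm-wasserstein2} is essential.

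Once $d$ is known to be a semi-metric, the original coupling $\ga$, viewed as a measure on $Z\times Z$ supported on $X\times Y$, is a coupling between $\tilde\mu_X$ and $\tilde\mu_Y$ with $d$-cost at most $\ep$, because $d(x,y)\le\ep$ for every $(x,y)\in\supp(\ga)$ by taking $(x_0,y_0)=(x,y)$ in the defining infimum. Together with \eqref{e:mm-wasserstein1}, the criterion (ii) of Proposition \ref{p:prokhorov-vs-wasserstein} is satisfied in $(Z,d)$, so $\mu_X$ and $\mu_Y$ are relative $(\ep,\de)$-close in $(Z,d)$, giving condition (i) of Definition \ref{d:mm-close}.
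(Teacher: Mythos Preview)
Your proposal is correct and follows essentially the same route as the paper: both directions go through Proposition~\ref{p:prokhorov-vs-wasserstein} on the disjoint union, and for (ii)$\Rightarrow$(i) you construct exactly the same semi-metric via the infimum over $\supp(\ga)$ of $d_X(x,x_0)+\ep+d_Y(y_0,y)$, with \eqref{e:mm-wasserstein2} used precisely to verify the ``double-crossing'' triangle inequality. Your write-up is in fact slightly more explicit than the paper's (e.g.\ noting why the marginals of $\ga$ live on $X$ and $Y$, and isolating the one triangle-inequality case where the $2\ep$ is essential).
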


In particular, $(\ep,0)$-closeness of mm-spaces is
equivalent to $\ep$-closeness with respect to the
$L_\infty$ Gromov--Wasserstein distance.

The proof of Proposition \ref{p:prokhorov-vs-wasserstein}
and Corollary \ref{c:mm-wasserstein} occupies the rest of this section.
We prove the proposition
by means of discrete approximations.
We begin with a version of it for bipartite graphs.

Let $G=(V,E)$ be a bipartite graph with partite sets $M$ and $W$.
That is, the set $V$ of vertices is the union of disjoint sets $M$ and $W$
and each edge connects a vertex from $M$ to a vertex from $W$.
(Exercise: guess where the notations $M$ and $W$ came from.)
For a set $A\subset V$ we denote by $N_G(A)$ its graph neighborhood,
i.e., the set of vertices adjacent to at least one vertex from $A$.
A \textit{matching} in $G$ is a set of pairwise disjoint edges.

The classic Hall's Marriage Theorem \cite{Hall}
states the following. If for every set $A\subset M$ one has $|N_G(A)|\ge|A|$,
then there exists a matching that covers $M$
(that is, the set of endpoints of the matching contains $M$).
For discussion of Hall's Theorem and related topics see 
e.g.\ \cite[Ch.~7]{Ore}.
We need the following generalization of Hall's Theorem.

\begin{lemma}[Dulmage-Mendelsohn \cite{DM}]
\label{l:marriage}
Let $G=(V,E)$ be a bipartite graph with partite sets $M$ and $W$.
Let $M_0\subset M$ and $W_0\subset W$ be sets such that,
for every subset $A$ of either $M_0$ or $W_0$ one has $|N_G(A)|\ge |A|$.
Then $G$ contains a matching that covers $M_0\cup W_0$.
\end{lemma}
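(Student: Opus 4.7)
The plan is to apply the classical Hall marriage theorem to one side and then adjust the resulting matching by an alternating-path swap to cover the other side. First, the hypothesis on $M_0$ together with the standard marriage theorem gives a matching $\mathcal{M} \subset E$ that covers $M_0$. Among all matchings covering $M_0$ I would pick one that maximizes the number of $W_0$-vertices it covers; write $V(\mathcal{M})$ for the set of endpoints of $\mathcal{M}$. The goal is to show that this choice forces $W_0 \subset V(\mathcal{M})$.

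Assume the contrary and fix an unmatched $w_0 \in W_0$. Build the usual $\mathcal{M}$-alternating tree rooted at $w_0$: let $T \subset M$ be the set of $M$-vertices reachable from $w_0$ by an $\mathcal{M}$-alternating path ending in a non-matching edge, and let $S \subset W$ consist of $w_0$ together with the $W$-vertices reachable from $w_0$ by an alternating path whose last edge lies in $\mathcal{M}$. A routine check gives $N_G(S) \subset T$, and $\mathcal{M}$ pairs each $m \in T$ with a distinct element of $S \setminus \{w_0\}$, so $|T| \le |S|-1$. Now two cases arise. If the tree reaches an unmatched vertex other than $w_0$, the corresponding augmenting path produces a new matching $\mathcal{M}'$ that still covers $M_0$ (an augmentation along a path never un-matches an internal vertex of the path) and additionally covers $w_0$, contradicting maximality of $|W_0 \cap V(\mathcal{M})|$. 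Otherwise every vertex of $T$ and of $S \setminus \{w_0\}$ is matched; if moreover some $w' \in S \setminus W_0$ with $w' \neq w_0$ exists, then flipping the matching status of the edges along the alternating path from $w_0$ to $w'$ yields a matching that still covers $M_0$ but covers $w_0$ at the cost of un-matching $w' \notin W_0$, again strictly increasing $|W_0 \cap V(\mathcal{M})|$. Hence $S \subset W_0$, and then $|N_G(S)| \le |T| \le |S|-1 < |S|$ violates the Hall hypothesis on $W_0$.

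The main obstacle is that a direct augmenting-path proof --- apply Hall on $M_0$, then augment from each unmatched $w \in W_0$ --- does not work, because the alternating tree rooted at $w_0$ need not lie inside $W_0$, and without that containment the Hall hypothesis on $W_0$ cannot be invoked. The key new ingredient is the ``swap'' step that trades a matched outsider $w' \in S \setminus W_0$ for $w_0$; combined with the choice of $\mathcal{M}$ maximizing $|W_0 \cap V(\mathcal{M})|$, it is precisely what forces $S \subset W_0$ and allows the two halves of the hypothesis to combine.
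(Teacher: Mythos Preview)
Your argument is correct. The paper itself does not prove this lemma: it simply cites Dulmage--Mendelsohn and remarks that the statement also follows from Hall's Theorem combined with Ore's Mapping Theorem. Your proof is therefore a self-contained, elementary substitute for that citation. The strategy---apply Hall to saturate $M_0$, choose the resulting matching to maximize $|W_0\cap V(\mathcal M)|$, and then argue by alternating paths that any unmatched $w_0\in W_0$ forces either an augmenting path or a swap that improves the count---is sound, and the extremal choice of $\mathcal M$ is exactly what makes the two halves of the hypothesis cooperate.

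One small expository wrinkle: the sentence ``$\mathcal{M}$ pairs each $m \in T$ with a distinct element of $S \setminus \{w_0\}$, so $|T| \le |S|-1$'' is placed before the case split, but it already presupposes that every $m\in T$ is matched---precisely what fails in your Case~1. Since you only \emph{use} the inequality $|T|\le|S|-1$ at the end of Case~2, where every $m\in T$ is matched, the logic is unaffected; just move that clause inside Case~2 and the write-up is clean. (Also note that vertices of $S\setminus\{w_0\}$ are automatically matched, so Case~1 is really just ``some $m\in T$ is unmatched''.)
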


This lemma is proven as Theorem 1 in \cite{DM}.
It can also be seen as a combination of Hall's Theorem
and Ore's Mapping Theorem, see \cite[Theorem 7.4.1]{Ore}
or \cite[Theorem 2.3.1]{Mirsky}.

The next lemma is a ``continuous'' generalization of
Lemma \ref{l:marriage} where finite sets $M$ and $W$
are replaced by metric spaces $X$ and $Y$,
and a closed set $E\subset X\times Y$
plays the role of the set of edges of the graph.

\begin{lemma}%[generalized Hall's lemma]
\label{l:Hall} 
Let $X$ and $Y$ be compact metric spaces.
Let $\mu_X^{}$, $\mu'_X$ be finite Borel measures on $X$
and $\mu_Y^{}$, $\mu'_Y$ finite Borel measures on $Y$
such that $\mu_X^{}\ge\mu'_X$ and $\mu_Y^{}\ge\mu'_Y$.

Let $E \subset X \times Y$ be a closed set.
Suppose that,
for any Borel sets $A\subset X$ and $B \subset Y$ one has
\be \label{1.23.12}
\mu_Y^{}(A^E) \geq \mu'_X(A),\quad \mu_X^{}(B^E) \geq \mu'_Y(B),
\ee
where
\begin{align*}
 A^E &= \{y \in Y: \text{there is $x\in A$ such that $(x,y)\in E$} \}, \\
 B^E &= \{x \in X: \text{there is $y\in B$ such that $(x,y)\in E$} \}.
\end{align*}
Then there exist measures $\tilde \mu_X$, $\tilde \mu_Y$ such that
\be \label{4.23.12}
\mu_X'\leq \tilde \mu_X^{} \leq \mu_X^{},      \quad \mu_Y'\leq \tilde \mu_Y^{} \leq \mu_Y^{},
\ee
and there is a measure coupling $\ga$ between $\tilde \mu_X$ and $\tilde \mu_Y$ 
such that $\supp(\ga) \subset E$.
\end{lemma}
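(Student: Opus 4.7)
The strategy is to discretize both spaces, use the hypothesis to verify Hall's condition on an appropriate finite bipartite graph, invoke Lemma \ref{l:marriage} (Dulmage--Mendelsohn) to produce a matching, and then pass to a weak-$*$ limit to recover a continuous coupling supported in $E$.

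For each $n\in\N$, partition $X$ and $Y$ into finitely many Borel cells $\{X_i^n\}_{i=1}^{k_n}$ and $\{Y_j^n\}_{j=1}^{l_n}$ of diameters less than $1/n$, chosen (by a generic perturbation) so that the cell boundaries are $\mu_X$- and $\mu_Y$-null. Pick an integer $N_n\to\infty$ and form a bipartite multigraph $G_n$: on the $X$-side, use $\lfloor N_n\mu_X(X_i^n)\rfloor$ copies of $X_i^n$, and mark the first $\lceil N_n(\mu_X'(X_i^n)-\eta_n)\rceil$ of them as the distinguished subset $M_0^n$, where $\eta_n\to 0$ is a small slack ensuring $\lceil\cdot\rceil\le\lfloor\cdot\rfloor$; do the analogous construction for $Y$ to obtain $W_0^n$. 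Put an edge between all copies of $X_i^n$ and all copies of $Y_j^n$ whenever $E\cap(\overline{X_i^n}\times\overline{Y_j^n})\neq\emptyset$.

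To check the Dulmage--Mendelsohn hypothesis, consider a subset $I$ of $X$-cell indices (it suffices to consider sets of the form: all $M_0^n$-copies of cells indexed by $I$, since enlarging to full copy-classes only tightens the inequality). Set $A_X=\bigsqcup_{i\in I}X_i^n$ and let $J$ be the graph-neighborhood of $I$ among $Y$-cells. Any $(x,y)\in E$ with $x\in A_X$ has $y\in\overline{Y_j^n}$ for some $j\in J$, hence $A_X^E\subset\bigcup_{j\in J}\overline{Y_j^n}$. Using null boundaries and applying \eqref{1.23.12},
\[
\sum_{j\in J}\mu_Y(Y_j^n)\;\ge\;\mu_Y(A_X^E)\;\ge\;\mu_X'(A_X)\;=\;\sum_{i\in I}\mu_X'(X_i^n).
\]
For $N_n$ large relative to $n$ and $\eta_n$, this real inequality implies the integer inequality $\sum_{j\in J}\lfloor N_n\mu_Y(Y_j^n)\rfloor\ge\sum_{i\in I}\lceil N_n(\mu_X'(X_i^n)-\eta_n)\rceil$, which is Hall's condition on $M_0^n$. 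The condition on $W_0^n$ follows symmetrically. Lemma \ref{l:marriage} then supplies a matching $\mathcal M_n$ covering $M_0^n\cup W_0^n$.

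For each matched edge joining copies of $X_i^n$ and $Y_j^n$, choose a witness point $(x_{ij}^n,y_{ij}^n)\in E\cap(\overline{X_i^n}\times\overline{Y_j^n})$ and set $\gamma_n:=N_n^{-1}\sum\delta_{(x_{ij}^n,y_{ij}^n)}$ over matched edges. Then $\supp\gamma_n\subset E$, and the marginals of $\gamma_n$ give $X_i^n$ mass between $\lceil N_n(\mu_X'(X_i^n)-\eta_n)\rceil/N_n$ and $\lfloor N_n\mu_X(X_i^n)\rfloor/N_n$, with an analogous pinch on the $Y$-side. By compactness of $X\times Y$ and the uniform mass bound, extract a weakly convergent subsequence $\gamma_n\rightharpoonup\gamma$; since $E$ is closed, $\supp\gamma\subset E$. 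For any continuous $f\ge 0$ on $X$, approximating $f$ by cell-wise constants and using the marginal pinch on $\gamma_n$ yields $\int f\,d\mu_X'\le\int f\,d(\gamma)_X\le\int f\,d\mu_X$; hence the marginals $\tilde\mu_X,\tilde\mu_Y$ of $\gamma$ satisfy \eqref{4.23.12}, completing the proof.

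\textbf{Main obstacle.} The delicate step is bridging the continuous hypothesis \eqref{1.23.12} and the integer Hall condition of Lemma \ref{l:marriage}: rounding the measures to multiplicities introduces errors, and the inclusion $A_X^E\subset\bigcup_{j\in J}\overline{Y_j^n}$ can lose measure via boundary overlaps. Both difficulties are absorbed by the vanishing slack $\eta_n$ together with choosing partitions with null boundaries, all of which disappear in the weak limit.
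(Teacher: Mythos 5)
Your proof is correct and follows essentially the same route as the paper's: discretize both spaces into small cells, verify the marriage condition from \eqref{1.23.12}, apply Lemma \ref{l:marriage}, and extract a weak limit using compactness and the closedness of $E$. The only differences are implementation details: the paper handles rounding by approximating the measure values by rationals and clearing denominators (rather than your floor/ceiling with slack $\eta_n$), and it fattens $E$ to a $2\sigma_n$-neighborhood at the discrete stage instead of keeping witness points inside $E$ via closures of cells.
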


\begin{proof}
First we prove the lemma in the special case when $X$ and $Y$ are finite sets.
By means of approximation we may assume that all values of the measures
$\mu_X^{},\mu_X',\mu_Y^{},\mu_Y'$ are rational numbers.
Multiplying by a common denominator we make them integers.
Then we derive the statement from Lemma \ref{l:marriage} as follows.

Split each point $x\in X$ into $\mu_X^{}(x)$ points of unit weight 
(keep in mind that $\mu_X^{}(x)\in\Z$).
Paint $\mu'_X(x)$ of these points in red and
the remaining $\mu_X^{}(x)-\mu'_X(x)$ points in green.
Similarly, split each point $y\in Y$ into $\mu_Y^{}(y)$ points
of which $\mu'_Y(y)$ are red and the rest are green.
Let $M$ and $W$ be the sets of points descending from points 
of $X$ and $Y$, resp.
Let $M_0$ and $W_0$ be the sets of red points
from $M$ and $W$, resp.

Now construct a bipartite graph $G$ with partite sets $M$ and $W$ as follows.
For $x\in X$ and $y\in Y$ such that $(x,y)\in E$, connect every
descendant of $x$ to every descendant of $y$ by an edge in $G$.
If $(x,y)\notin E$ then there are no edges between descendants of $x$ and~$y$.

The relation \eqref{1.23.12} implies that the graph $G$ satisfies
the assumptions of Lemma \ref{l:marriage}.
Therefore $G$ contains a matching $E_0$ covering $M_0\cup W_0$.
For each pair $(x,y)\in X\times Y$ 
define a point measure $\ga(x,y)$ equal to
the number of edges from $E_0$ connecting descendants of $x$ and~$y$.
Then $\ga$ is a desired coupling between some measures $\tilde\mu_X$
and $\tilde\mu_Y$ satisfying \eqref{4.23.12}.
Thus we are done with the discrete case.

Passing to the general case,
fix a sequence $\sigma_n\to 0$ of positive numbers.
For each $n$, divide $X$ and $Y$ into a finite number of Borel subsets $\Omega_X^i$, $\Omega_Y^j$ with
$\diam(\Omega_X^i)<\sigma_n$ and $\diam(\Omega_Y^j)<\sigma_n$.
Choose  points $x_i\in\Omega_X^i,\, y_j\in\Omega_Y^j$ 
and associate to them point measures
$\mu_{i,n}^{}=\mu_X(\Omega^i_X),\,  \mu'_{i,n}=\mu'_X(\Omega^i_X)$ and
$\mu_{j,n}^{}=\mu_Y(\Omega^j_Y),\,  \mu'_{j,n}=\mu'_Y(\Omega^j_Y)$.
This defines atomic measures $\mu_{X,n}^{},\mu'_{X,n}$ on $X$
and $\mu_{Y,n}^{},\mu'_{Y,n}$ on $Y$
and the relation \eqref{1.23.12} holds for these discrete measures 
with $E_n$ in place of $E$, where $E_n$ is the $2\sigma_n$-neighborhood of $E$ 
with respect to the product distance on $X \times Y$.

By the discrete case proven above,
there is a measure $\ga_n$ on $X\times Y$
whose marginals $\tilde\mu_{X,n}$ and $\tilde\mu_{Y,n}$
satisfy
\be \label{2.23.12}
\mu'_{X, n} \leq \tilde \mu_{X, n}^{}  \leq \mu_{X, n}^{},
\quad 
\mu'_{Y, n} \leq \tilde \mu_{Y, n}^{}  \leq \mu_{Y, n}^{}
\ee
and such that $\supp(\ga_n) \subset E_n$.
By the weak compactness of the space of measures
we may assume that the sequences $\tilde\mu_{X,n}^{}$, 
$\tilde\mu_{Y,n}^{}$ and $\ga_n$ weakly converge
to some measures $\tilde\mu_X^{}$, $\tilde\mu_Y^{}$ and~$\ga$, resp.
Then $\supp(\ga)\subset E$ and
$\ga$ is a measure coupling between $\tilde\mu_X$ and $\tilde\mu_Y$.
Also observe that the measures $\mu_{X,n}^{}$, $\mu'_{X,n}$,
$\mu_{Y,n}^{}$, $\mu'_{Y,n}$ weakly converge
to $\mu_X^{}$, $\mu'_X$, $\mu_Y^{}$, $\mu'_Y$, resp.
This and \eqref{2.23.12} imply that $\tilde\mu_X^{}$ and $\tilde\mu_Y^{}$
satisfy \eqref{4.23.12}.
\end{proof}

\begin{proof}[Proof of Proposition \ref{p:prokhorov-vs-wasserstein}]
Let $X=\supp(\mu_X)$ and $Y=\supp(\mu_Y)$.
The implication (i)$\Rightarrow$(ii)
follows from Lemma \ref{l:Hall} by substituting
$\mu'_X=e^{-\de}\mu_X$, $\mu'_Y=e^{-\de}\mu_Y$,
and 
$$
E = \{ (x,y)\in X\times Y :\, d(x,y)\le\ep \} .
$$
To prove the implication (ii)$\Rightarrow$(i),
let $\tilde\mu_X$ and $\tilde\mu_Y$ be as in
Proposition \ref{p:prokhorov-vs-wasserstein}(ii),
and let $\ga$ be a measure coupling
between $\tilde\mu_X$ and $\tilde\mu_Y$
realizing the $L^\infty$-Wasserstein distance.
Then $\supp(\ga)\subset E$.
This implies that, for every Borel set $A\subset X$,
$$
 \tilde\mu_X(A) = \ga(A\times Y)
 %= \ga((A\times Y)\cap E)
 \le \ga(X\times(A^\ep\cap Y))
 = \tilde\mu_Y(A^\ep) ,
$$
where the inequality follows from the inclusion
$$
 (A\times Y)\cap E \subset X\times(A^\ep\cap Y) .
$$
Therefore
$$
 \mu_X(A) \le e^\de\,\tilde\mu_X(A)
 \le e^\de\,\tilde\mu_Y(A^\ep)
 \le e^\de\mu_Y(A^\ep) .
$$
Similarly $\mu_Y(B) \le e^\de\mu_X(B^\ep)$
for every Borel set $B\subset Y$.
Thus $\mu_X$ and $\mu_Y$ are 
relative $(\ep,\de)$-close.
\end{proof}

\begin{proof}[Proof of Corollary \ref{c:mm-wasserstein}]
(i)$\Rightarrow$(ii):
By definition, there exists a semi-metric $d$ on the disjoint union $Z=X\sqcup Y$ 
such that $\mu_X$ and $\mu_Y$, regarded as measures on~$Z$, are relative $(\ep,\de)$-close.
Proposition \ref{p:prokhorov-vs-wasserstein} implies that there exist
measures $\tilde\mu_X$ and $\tilde\mu_Y$ satisfying \eqref{e:mm-wasserstein1}
and a measure coupling $\ga$ between them such that
$d(x,y)\le\ep$ for all $(x,y)\subset\supp\ga$.
This property and the triangle inequality implies \eqref{e:mm-wasserstein2}.

(ii)$\Rightarrow$(i):
The proof is similar to that of \cite[Theorem 7.3.25]{BBI}.
Let $\ga$ be a measure coupling between $\tilde\mu_X$ and $\tilde\mu_Y$
such that \eqref{e:mm-wasserstein1} and \eqref{e:mm-wasserstein2} are satisfied.
Define a semi-metric $d$ on $X\sqcup Y$ by setting $d|_{X\times X}=d_X$,
$d|_{Y\times Y}=d_Y$, and
$$
 d(x,y) = \inf_{(x',y')\in\supp(\ga)} \left\{ d_X(x,x')+d_Y(y,y')+\ep \right\} .
$$
The triangle inequality for $d$ easily follows from \eqref{e:mm-wasserstein2},
thus $d$ is indeed a semi-metric.
The definition of $d$ implies that $d(x,y)=\ep$ if $(x,y)\in\supp(\ga)$.
Therefore $W_\infty(\tilde\mu_X,\tilde\mu_Y)\le\ep$ where $\tilde\mu_X$ and $\tilde\mu_Y$
are regarded as measures on~$Z$.
By Proposition \ref{p:prokhorov-vs-wasserstein} this implies that
$\mu_X$ and $\mu_Y$ are relative $(\ep,\de)$-close
and hence the mm-spaces $X$ and $Y$ are $(\ep,\de)$-close.
\end{proof}

\section{Stability of eigenvalues}
\label{sec:stability}

In this section we formulate and prove Theorem \ref{t:stability}
which is one of the main results of this paper.
Informally it says that if two mm-spaces are close then the lower parts of
spectra of their $\rho$-Laplacians are close.
First we introduce conditions on mm-spaces
needed in the theorem.

\begin{definition}[SLV condition] \label{d:SLV}
Let $X$ be a mm-space and $\Lambda,\rho,\ep>0$. We say that $X$
satisfies the \emph{spherical layer volume condition} with parameters $\Lambda$,
$\rho$, $\ep$, if for every $x\in \supp(\mu_X)$,
\be\label{e:SLV}
\frac{\mu(B_{\rho+\ep}(x) \setminus B_{\rho}(x))}{\mu(B_{\rho}(x))} 
\leq \Lambda \,\frac{\ep}{\rho}.
\ee
We abbreviate this condition as $SLV(\Lambda,\rho,\ep)$.
\end{definition}

\begin{definition}[BIV condition] \label{d:BIV}
Let $X$ be a mm-space, $0<\ep\le\rho/2$ and $\Lambda>0$. We say that $X$
satisfies the \emph{ball intersection volume condition} with parameters 
$\Lambda$, $\rho$, and $\ep$, if for all $x,y\in\supp(\mu_X)$ such that $d_X(x,y)\le\rho+\ep$,
$$
 \mu(B_{\rho}(x)\cap B_{\rho}(y)) \ge \Lambda^{-1} \mu(B_{\rho+\ep}(x)).
$$
We abbreviate this condition as $BIV(\Lambda,\rho,\ep)$.
\end{definition}

Note that the Bishop--Gromov inequality \eqref{e:bishop-gromov}
implies $SLV(\Lambda',\rho,\ep)$ for all $\rho\ge\ep>0$
with $\Lambda'$ depending on the parameter $\Lambda$ of \eqref{e:bishop-gromov}.
For $\ep=\rho$, the SLV condition \eqref{e:SLV} turns into a doubling condition:
$$
 \mu(B_{2\rho}(x)) \le 2\Lambda\mu(B_\rho(x)) .
$$
If $d$ is a length metric and this doubling condition
holds for all $\rho>0$, then $X$ satisfies
$BIV(\Lambda',\rho,\ep)$ for all $\rho>0$ and $\ep\le\rho/2$,
where $\Lambda'$ depends only on $\Lambda$.
This follows from the fact that the intersection
$B_\rho(x)\cap B_\rho(y)$ contains a
ball of radius $\frac{\rho-\ep}2$.

The next lemma shows that the conditions SLV and BIV
are in a sense stable with respect to
$(\ep,\de)$-closeness introduced in Section \ref{sec:wasserstein}.

\begin{lemma}
\label{l:conditions-stable}
Let $X$ and $Y$ be $(\ep,\de)$-close mm-spaces 
(see Definition \ref{d:mm-close})
where $0<\ep\le\rho/12$. 
Then:

1. If $X$ satisfies $SLV(\Lambda,\rho-2\ep,5\ep)$,
then $Y$ satisfies $SLV(6e^{2\de}\Lambda,\rho,\ep)$.

2. If $X$ satisfies $BIV(\Lambda,\rho-2\ep,5\ep)$,
then $Y$ satisfies $BIV(e^{2\de}\Lambda,\rho,\ep)$.
\end{lemma}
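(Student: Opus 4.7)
By Definition \ref{d:mm-close} there is a semi-metric $d$ on $Z=X\sqcup Y$ with respect to which $\mu_X$ and $\mu_Y$ are relative $(\ep,\de)$-close, so the Prokhorov-type inequalities $e^\de\mu_X(A^\ep)\ge\mu_Y(A)$ and $e^\de\mu_Y(A^\ep)\ge\mu_X(A)$ hold for every Borel $A\subset Z$. My plan is to transfer SLV and BIV from $X$ to $Y$ by choosing, for each $y_*\in\supp(\mu_Y)$, a reference point $x_0\in\supp(\mu_X)$ with $d(x_0,y_*)\le\ep$, and then sandwiching $\ep$-neighborhoods of balls around $y_*$ between concentric shells around $x_0$ via the triangle inequality. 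The existence of such $x_0$ follows from applying $e^\de\mu_X(B_\tau(y_*)^\ep)\ge\mu_Y(B_\tau(y_*))>0$ for $\tau\to 0$ together with compactness of $\supp(\mu_X)$.

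\emph{Part 1 (SLV).} Set $A=B_{\rho+\ep}(y_*)\setminus B_\rho(y_*)$. Two applications of the triangle inequality give
$$A^\ep\cap X\subset B_{\rho+3\ep}(x_0)\setminus B_{\rho-2\ep}(x_0), \qquad (B_{\rho-2\ep}(x_0))^\ep\cap Y\subset B_\rho(y_*).$$
Relative closeness applied to $A$ and to $B_{\rho-2\ep}(x_0)$ then yields
$$\mu_Y(A)\le e^\de\mu_X\bigl(B_{\rho+3\ep}(x_0)\setminus B_{\rho-2\ep}(x_0)\bigr), \qquad \mu_X(B_{\rho-2\ep}(x_0))\le e^\de\mu_Y(B_\rho(y_*)).$$
Since $\rho+3\ep=(\rho-2\ep)+5\ep$, the assumed $SLV(\Lambda,\rho-2\ep,5\ep)$ at $x_0$ bounds the first right-hand side by $\Lambda\cdot\frac{5\ep}{\rho-2\ep}\mu_X(B_{\rho-2\ep}(x_0))$. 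Chaining the three estimates and using $\ep\le\rho/12$ (so $\rho-2\ep\ge 5\rho/6$ and thus $\frac{5\ep}{\rho-2\ep}\le\frac{6\ep}{\rho}$) gives $\mu_Y(A)\le 6e^{2\de}\Lambda\cdot\frac{\ep}{\rho}\mu_Y(B_\rho(y_*))$, which is precisely $SLV(6e^{2\de}\Lambda,\rho,\ep)$.

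\emph{Part 2 (BIV).} For $y_1,y_2\in\supp(\mu_Y)$ with $d_Y(y_1,y_2)\le\rho+\ep$, pick $x_i\in\supp(\mu_X)$ with $d(x_i,y_i)\le\ep$ as above. The triangle inequality gives $d_X(x_1,x_2)\le\rho+3\ep=(\rho-2\ep)+5\ep$, so $BIV(\Lambda,\rho-2\ep,5\ep)$ applies and produces $\mu_X(B_{\rho-2\ep}(x_1)\cap B_{\rho-2\ep}(x_2))\ge\Lambda^{-1}\mu_X(B_{\rho+3\ep}(x_1))$. The triangle-inequality inclusions
$$(B_{\rho+\ep}(y_1))^\ep\cap X\subset B_{\rho+3\ep}(x_1), \quad (B_{\rho-2\ep}(x_1)\cap B_{\rho-2\ep}(x_2))^\ep\cap Y\subset B_\rho(y_1)\cap B_\rho(y_2),$$
combined with the two Prokhorov-type inequalities, yield $\mu_X(B_{\rho+3\ep}(x_1))\ge e^{-\de}\mu_Y(B_{\rho+\ep}(y_1))$ and $\mu_Y(B_\rho(y_1)\cap B_\rho(y_2))\ge e^{-\de}\mu_X(B_{\rho-2\ep}(x_1)\cap B_{\rho-2\ep}(x_2))$. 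Chaining gives $\mu_Y(B_\rho(y_1)\cap B_\rho(y_2))\ge(e^{2\de}\Lambda)^{-1}\mu_Y(B_{\rho+\ep}(y_1))$, as desired.

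The main technical nuisance is the open-versus-closed ball distinction: the triangle-inequality inclusions naturally produce closed balls, whereas SLV and BIV are formulated with $B_r$. For a cofinal set of radii the boundary spheres have $\mu_X$- and $\mu_Y$-measure zero, and the argument above applies verbatim; for the remaining countable set one perturbs $\ep\mapsto\ep(1+\kappa)$ and lets $\kappa\to 0^+$, using continuity of the measure of concentric balls in the radius outside a countable set. Beyond this bookkeeping, I do not foresee any real obstacle.
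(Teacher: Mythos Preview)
Your proof is correct and follows essentially the same approach as the paper: pick a reference point in $\supp(\mu_X)$ within distance $\ep$ of each point of $\supp(\mu_Y)$, then use the triangle inequality to sandwich the relevant $Y$-sets between concentric $X$-annuli/intersections and apply the two Prokhorov-type inequalities; the arithmetic (including the use of $\ep\le\rho/12$ to bound $\tfrac{5\ep}{\rho-2\ep}$ by $\tfrac{6\ep}{\rho}$ and to ensure $5\ep\le(\rho-2\ep)/2$) matches the paper's. The only cosmetic differences are that the paper simply reduces to the full-support case at the outset rather than arguing the existence of $x_0$ via compactness, and it does not dwell on the open-versus-closed ball issue that you flag in your final paragraph.
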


\begin{proof}
We may assume that $\mu_X$ and $\mu_Y$ have full support.
By definition, there is a metric $d$ on the disjoint union
$Z=X\sqcup Y$ such that $\mu_X$ and $\mu_Y$
are relative $(\ep,\de)$-close in $(Z,d)$.
Throughout this proof all balls, neighborhoods, etc, are considered
in the space $(Z,d)$.
Since the measures have full support,
the Hausdorff distance between $X$ and $Y$
is no greater than~$\ep$.
That is, for every $y\in Y$ there exists $x\in X$
such that $d(x,y)\le\ep$, and vice versa.

Let $y\in Y$. Take $x\in X$ such that $d(x,y)\le\ep$.
Recall that $A^\ep$ denotes the closed $\ep$-neighborhood of a set $A$.
The triangle inequality implies that
$$
  (B_{\rho-2\ep}(x))^\ep\subset B_{\rho}(y)
$$
and
$$
 (B_{\rho+\ep}(y)\setminus B_{\rho}(y))^\ep
 \subset B_{\rho+3\ep}(x)\setminus B_{\rho-2\ep}(x) .
$$
These inclusions and the relative $(\ep,\de)$-closeness 
of $\mu_X$ and $\mu_Y$ imply that
$$
 \mu_Y(B_{\rho}(y)) \ge e^{-\de} \mu_X(B_{\rho-2\ep}(x))
$$
and
$$
 \mu_Y (B_{\rho+\ep}(y)\setminus B_{\rho}(y))
 \le e^\de\mu_X(B_{\rho+3\ep}(x)\setminus B_{\rho-2\ep}(x)).
$$
Therefore
$$
\frac{\mu(B_{\rho+\ep}(y) \setminus B_{\rho}(y))}{\mu(B_{\rho}(y))} 
\le \frac{\mu(B_{\rho+3\ep}(x) \setminus B_{\rho-2\ep}(x))}{\mu(B_{\rho-2\ep}(x))} 
\le e^{2\de}\Lambda \,\frac{5\ep}{\rho-2\ep}
\le 6\Lambda\frac\ep\rho 
$$
and the first claim of the proposition follows.

To prove the second claim, consider points $y_1,y_2\in Y$
such that $d(y_1,y_2)\le\rho+\ep$. 
We have to prove that
$$
 Q := \frac{\mu_Y(B_{\rho}(y_1)\cap B_{\rho}(y_2))}{\mu_Y(B_{\rho+\ep}(y_1))}
 \ge (e^{2\de}\Lambda)^{-1} .
$$
Choose $x_1,x_2\in X$ such that
$d(x_1,y_1)\le\ep$ and $d(x_2,y_2)\le\ep$.
The triangle inequality implies that
$d(x_1,x_2)\le\rho+3\ep$,
$$
 (B_{\rho+\ep}(y_1))^\ep \subset B_{\rho+3\ep}(x_1)
$$
and
$$
 (B_{\rho-2\ep}(x_1)\cap B_{\rho-2\ep}(x_2))^\ep \subset
 B_{\rho}(y_1)\cap B_{\rho}(y_2) .
$$
Therefore, by relative $(\ep,\de)$-closeness of $\mu_X$ and $\mu_Y$,
$$
 \mu_Y(B_{\rho+\ep}(y_1)) \le e^\de \mu_X(B_{\rho+3\ep}(x_1))
$$
and
$$
 \mu_Y(B_{\rho}(y_1)\cap B_{\rho}(y_2))
 \ge \ep^{-\de}\mu_X(B_{\rho-2\ep}(x_1)\cap B_{\rho-2\ep}(x_2)) .
$$
Hence
$$
 Q \ge e^{-2\de} \, \frac{\mu_X(B_{\rho-2\ep}(x_1)\cap B_{\rho-2\ep}(x_2))}{\mu_X(B_{\rho+3\ep}(x_1))}
 \ge e^{-2\de}\Lambda^{-1}
$$
where the last inequality follows from the BIV condition for $X$.
This finishes the proof of Lemma \ref{l:conditions-stable}.
\end{proof}

Now we are in a position to state our main theorem.

\begin{theorem}\label{t:stability}
For every $\Lambda>0$ there exists $C=C(\Lambda)>0$
such that the following holds.
If $X$ and $Y$ are mm-spaces which are $(\ep,\de)$-close and
satisfy the conditions $SLV(\Lambda,\rho,2\ep)$ and $BIV(\Lambda,\rho,2\ep)$,
$0\le\ep\le\rho/4$, $\de\ge 0$,
then
\be\label{e:main-estimate}
  e^{-4\de}(1+C\ep/\rho)^{-1}\le\frac{\la_k(X,\rho)}{\la_k(Y,\rho)} \le e^{4\de}(1+C\ep/\rho)
\ee
for all $k$ such that $\la_k(X,\rho)<  e^{-4\de}(1+C\ep/\rho)^{-1}\rho^{-2}$.
\end{theorem}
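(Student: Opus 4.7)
The plan is to apply the min--max formula~\eqref{e:minmax} after transporting test functions from $L^2(X)$ to $L^2(Y)$ through the coupling furnished by Corollary~\ref{c:mm-wasserstein}. By the symmetric roles of $X$ and $Y$ in the hypotheses it suffices to prove one of the two inequalities in~\eqref{e:main-estimate}, say $\la_k(Y,\rho)\le e^{4\de}(1+C\ep/\rho)\la_k(X,\rho)$.

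First reduce to the case $\de=0$. Corollary~\ref{c:mm-wasserstein} yields measures $\tilde\mu_X,\tilde\mu_Y$ with $e^{-\de}\mu_X\le\tilde\mu_X\le\mu_X$ and $e^{-\de}\mu_Y\le\tilde\mu_Y\le\mu_Y$, together with a coupling $\ga$ between them whose support satisfies the distortion bound $|d_X(x_1,x_2)-d_Y(y_1,y_2)|\le 2\ep$. Two applications of~\eqref{e:la-change-mu} (one on each of $X,Y$) show that replacing $\mu_X,\mu_Y$ by $\tilde\mu_X,\tilde\mu_Y$ costs at most a factor $e^{4\de}$ in the eigenvalues, matching the exponential factor in~\eqref{e:main-estimate}. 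We henceforth relabel $\tilde\mu_X,\tilde\mu_Y$ as $\mu_X,\mu_Y$ and take $\de=0$.

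Disintegrate $\ga=\int_Y\ga_y\,d\mu_Y(y)$ with each $\ga_y$ a probability measure on $X$, and define the transport operator $T\co L^2(X)\to L^2(Y)$ by $Tu(y)=\int_X u(x)\,d\ga_y(x)$. The key geometric feature of $\ga$ is metric control along fibers: any $x_1,x_2\in\supp(\ga_y)$ satisfy $d_X(x_1,x_2)\le 2\ep$, and more generally $(x_1,y_1),(x_2,y_2)\in\supp(\ga)$ with $d_Y(y_1,y_2)<\rho$ forces $d_X(x_1,x_2)<\rho+2\ep$. Two core estimates follow. Jensen's inequality combined with the distortion bound gives $D^\rho_Y(Tu)\le D^{\rho+2\ep}_X(u)$, and an auxiliary lemma using $BIV(\Lambda,\rho,2\ep)$---insert a midpoint $x_3\in B_\rho(x_1)\cap B_\rho(x_2)$ of measure at least $\Lambda^{-1}\mu(B_{\rho+2\ep}(x_1))$, apply $(a-b)^2\le 2(a-c)^2+2(c-b)^2$, and integrate---upgrades this to $D^\rho_Y(Tu)\le(1+C\ep/\rho)D^\rho_X(u)$. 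The condition $SLV(\Lambda,\rho,2\ep)$ together with $(\ep,0)$-closeness yields $\mu_Y(B_\rho(y))\le(1+C\ep/\rho)\mu_X(B_\rho(x))$ for $(x,y)\in\supp(\ga)$, making the weights in $\|\cdot\|_{X^\rho}$ and $\|\cdot\|_{Y^\rho}$ comparable. Combining this with the weighted variance identity
\[
\rho^2\!\int_X\!\mu_X(B_\rho(x))u(x)^2\,d\mu_X=\rho^2\!\int_Y\!\mu_Y(B_\rho(y))(Tu)^2\,d\mu_Y+R(u),
\]
where the remainder $R(u)$ gathers the $\ga_y$-variances of $u$ (weighted by $\mu_X(B_\rho(\cdot))$), and applying the midpoint trick to $R(u)$ at scale $2\ep$, we obtain the lower bound $\|Tu\|^2_{Y^\rho}\ge(1-C\ep/\rho)\|u\|^2_{X^\rho}-C\ep^2D^\rho_X(u)$.

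Take now a $k$-dimensional subspace $H\subset L^2(X)$ approximately realizing the infimum in~\eqref{e:minmax} for $\la:=\la_k(X,\rho)$. For $u\in H$ we have $D^\rho_X(u)\le\la\|u\|^2_{X^\rho}$, so the two estimates combine to
\[
\frac{D^\rho_Y(Tu)}{\|Tu\|^2_{Y^\rho}}\le\frac{(1+C\ep/\rho)\la}{1-C\ep/\rho-C\la\ep^2}.
\]
The hypothesis $\la<(1+C\ep/\rho)^{-1}\rho^{-2}$ keeps $C\la\ep^2\le C(\ep/\rho)^2$ bounded away from $1$, so the right-hand side is at most $(1+C'\ep/\rho)\la$; in particular $T|_H$ is injective, so $T(H)\subset L^2(Y)$ is genuinely $k$-dimensional. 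Inserting $T(H)$ into~\eqref{e:minmax} for $Y$ gives $\la_k(Y,\rho)\le(1+C'\ep/\rho)\la_k(X,\rho)$. The main obstacle throughout is the lower bound on $\|Tu\|^2_{Y^\rho}$: averaging could a priori shrink $u$ badly, but the spectral bound forces every $u\in H$ to be essentially constant at scale $2\ep\ll\rho$, which is precisely what bounds the variance term $R(u)$ and thereby rescues the argument.
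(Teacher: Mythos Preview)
Your reduction to $\de=0$ and your Dirichlet-energy estimate $D^\rho_Y(Tu)\le(1+C\ep/\rho)D^\rho_X(u)$ are fine and match the paper's treatment. The genuine gap is the claimed bound $R(u)\le C\ep^2 D^\rho_X(u)$ on the variance remainder. This is false in general: take $X=\{a,b\}$ with $d_X(a,b)=2\ep$ and $\mu_X(a)=\mu_X(b)=\tfrac12$, coupled to a one-point space $Y=\{y\}$ via $\ga_y=\tfrac12\de_a+\tfrac12\de_b$. A direct computation gives $\|u\|^2_{X^\rho}-\|Tu\|^2_{Y^\rho}=\tfrac{\rho^2}{4}(u(a)-u(b))^2=\rho^2 D^\rho_X(u)$, with no $\ep^2$ in sight. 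The ``midpoint trick'' from the BIV condition lets you pass from scale $\rho+2\ep$ to scale $\rho$ at the cost of a factor $(1+C\ep/\rho)$, but it cannot convert a fiberwise variance (which knows nothing about how small $\ep$ is compared to $\rho$) into something of order $\ep^2$. The correct bound---which is exactly what the paper establishes later in Proposition~\ref{p:TXY}---is $\|Tu\|^2_{Y^\rho}\ge A^{-1}\|u\|^2_{X^\rho}-A\rho^2 D^\rho_X(u)$. With $\rho^2$ in place of $\ep^2$, your final Rayleigh-quotient estimate becomes $\la_k(Y,\rho)\le A^2\la_k(X,\rho)/(1-A^2\rho^2\la_k(X,\rho))$, which blows up as $\la_k(X,\rho)\rho^2\to 1$ rather than giving the clean factor $(1+C'\ep/\rho)$ required by~\eqref{e:main-estimate}.

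The paper avoids this loss by a different route. Instead of projecting $I_Xu$ back down to $L^2(Y)$ (which costs norm), it lifts both spaces to the common measure space $(X\times Y,\ga)$: set $X_\ga=(X\times Y,d_{X|X\times Y},\ga)$ and $Y_\ga=(X\times Y,d_{Y|X\times Y},\ga)$. Lemma~\ref{l:splitting} shows that the splitting introduces no error whatsoever below the essential spectrum: $\la_k(X_\ga,\rho)=\la_k(X,\rho)$ for all $k$ with $\la_k(X,\rho)<\rho^{-2}$, because $\Dr_{X_\ga}$ acts as $\rho^{-2}$ on the entire orthogonal complement $L_X^\perp$. Since $X_\ga$ and $Y_\ga$ share the same underlying measure and differ only in the metric (by at most $2\ep$ on $\supp\ga$), Lemma~\ref{l:distance-change} compares their eigenvalues directly via min-max with the clean multiplicative factor $(1+C\ep/\rho)$ and no norm-loss term. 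This is why the paper's argument reaches the full range $\la_k<\rho^{-2}$ while the transport-operator approach you outline (even when done correctly) does not.
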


The proof of Theorem \ref{t:stability} occupies the rest of this section.
First we prove the theorem for $\de=0$
(see Proposition \ref{p:ep-0-close}).
In this case Corollary \ref{c:mm-wasserstein} implies
that the mm-spaces $X$ and $Y$ in question admit
a measure coupling $\ga$ satisfying \eqref{e:mm-wasserstein2}.

To estimate the difference between eigenvalues 
of $\Dr_X$ and $\Dr_Y$,
we transform $X$ to $Y$ in three steps.
In the case when $X$ and $Y$ are discrete spaces 
these steps can be described as follows. 
First, we split each atom of $X$ into several points
and distribute the measure between them.
The distances between the descendants of each atom
is set to be zero, so we obtain a semi-metric-measure space.
Second, we ``transport'' the points to their destinations in $Y$.
The formal meaning of this is that we keep the point set
and the measure but change distances between points.
Finally, we glue together some points to obtain $Y$. 
The last step is inverse to the first one with $Y$ in place $X$.

After we provided this intuition in the discrete case,
let us proceed with a formal construction of ``splitting''.
It is slightly more cumbersome.

Let $\ga$ a measure coupling
between mm-spaces $X$ and $Y$.
Recall that $\ga$ is a measure on $X\times Y$ and
for every Borel set $A \subset X$,
\be \label{4.21.12}
\mu_X(A)= \ga(A \times Y).
\ee
Define a semi-metric $d_{X|X\times Y}$ on $X\times Y$ by
$$
d_{X|X\times Y}((x_1, y_1), (x_2, y_2))=d_X(x_1, x_2).
$$
The desired splitting of $X$ is the mm-space $X_\ga=(X\times Y,d_{X|X\times Y},\ga)$.

We do not use the (non-Hausdorff) topology arising from the semi-metric $d_{X|X\times Y}$.
We equip $X\times Y$ with the standard product Borel $\sigma$-algebra.

An interested reader may check that the arguments below also apply
if one replaces the semi-metric $d_{X|X\times Y}$ by a genuine metric $d$
defined by 
$$
d((x_1, y_1), (x_2, y_2))=\max\{d_X(x_1, x_2),c\rho^{-2}d_Y(y_1,y_2)\}
$$
where $c$ is a sufficiently small constant, $0<c<1/\diam(Y)$.

Applying Definition \ref{d:rho-laplacian} to $X_\ga$ 
we define the associated $\rho$-Laplacian $\Dr_{X_\ga}$.
Even though $X_\ga$ is almost the same space as $X$,
the spectrum of $\Dr_{X_\ga}$ may slightly differ from
that of~$\Dr_X$.
We compare the two spectra in the following lemma:

\begin{lemma}\label{l:splitting}
Let $X$ and $X_\ga$ be as above. Then
$$
\spec(\Dr_{X_\ga})\subset\spec(\Dr_X)\cup\{\rho^{-2}\} .
$$
Furthermore, every eigenvalue smaller than $\rho^{-2}$
has the same multiplicity in the two spectra.
\end{lemma}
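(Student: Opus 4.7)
The plan is to split $L^2(X_\gamma)$ into two $\Dr_{X_\ga}$-invariant orthogonal subspaces: functions pulled back from $X$, and their complement in the weighted inner product. The first will give a copy of $\Dr_X$ and the second will be killed by $\Dr_{X_\ga}-\rho^{-2}\mathrm{Id}$.

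First I would observe that since the semi-metric $d_{X|X\times Y}$ ignores the $Y$-coordinate, the open ball $B_\rho(x,y)$ in $X_\ga$ equals $B_\rho(x)\times Y$. By the marginal condition \eqref{4.21.12},
\[
 \ga(B_\rho(x,y)) = \ga(B_\rho(x)\times Y) = \mu_X(B_\rho(x)).
\]
Hence the weighted measure on $X_\ga$ is $d\mu_{X_\ga}^\rho = \rho^2\mu_X(B_\rho(x))\,d\ga(x,y)$, the scalar product is
\[
 \bra u,v\cet_{X_\ga^\rho} = \rho^2\int_{X\times Y}\mu_X(B_\rho(x))u(x,y)v(x,y)\,d\ga(x,y),
\]
and the $\rho$-Laplacian takes the form
\[
 \Dr_{X_\ga}u(x,y) = \frac{1}{\rho^2\mu_X(B_\rho(x))}\int_{B_\rho(x)\times Y}\bigl(u(x,y)-u(x',y')\bigr)\,d\ga(x',y').
\]

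Next I would introduce the pullback $\pi^*\co L^2(X,\mu_X)\to L^2(X\times Y,\ga)$ along the projection $\pi(x,y)=x$, and let $V=\pi^*L^2(X,\mu_X)$. Using the marginal condition once more,
\[
 \bra\pi^*v_1,\pi^*v_2\cet_{X_\ga^\rho}=\rho^2\int_X\mu_X(B_\rho(x))v_1(x)v_2(x)\,d\mu_X(x)=\bra v_1,v_2\cet_{X^\rho},
\]
so $\pi^*$ is an isometry onto $V$. Similarly $D^\rho_{X_\ga}(\pi^*v)=D^\rho_X(v)$: the double integral over $\{d_X(x,x')<\rho\}$ in $(X\times Y)^2$ reduces, by integrating out the $Y$-marginals, to the corresponding integral over $X^2$. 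A direct computation shows $\Dr_{X_\ga}(\pi^*v)=\pi^*(\Dr_X v)$, so $V$ is invariant and the restriction of $\Dr_{X_\ga}$ to $V$ is unitarily equivalent to $\Dr_X$.

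Then I would analyze the orthogonal complement $W=V^\perp$ with respect to $\bra\cdot,\cdot\cet_{X_\ga^\rho}$. Disintegrating $\ga$ as $d\ga(x,y)=d\ga_x(y)\,d\mu_X(x)$ along $\pi$, the orthogonality condition $\bra\pi^*v,w\cet_{X_\ga^\rho}=0$ for every $v$ is equivalent to $\int_Y w(x,y)\,d\ga_x(y)=0$ for $\mu_X$-a.e. $x$. For such $w$, the integral $\int_{B_\rho(x)\times Y}w(x',y')\,d\ga(x',y')=\int_{B_\rho(x)}\!\bigl(\int_Y w(x',y')\,d\ga_{x'}(y')\bigr)\,d\mu_X(x')=0$, and the remaining term equals $\rho^{-2}w(x,y)$. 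Hence $\Dr_{X_\ga}w=\rho^{-2}w$ on all of $W$, so $W$ is invariant and contributes only the eigenvalue $\rho^{-2}$.

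The desired conclusions follow from the orthogonal direct-sum decomposition $L^2(X\times Y,\ga)=V\oplus W$ with $\Dr_{X_\ga}=\pi_*\Dr_X\pi^*\oplus\rho^{-2}\mathrm{Id}_W$: the spectrum is $\spec(\Dr_X)\cup\{\rho^{-2}\}$ (the second term dropped if $W=0$), and each eigenvalue $\la<\rho^{-2}$ comes entirely from the $V$-summand, so its multiplicity is preserved. The only delicate point is the measurable disintegration of $\ga$ along $\pi$, which is standard for Borel measures on compact metric spaces; no genuine obstacle is expected, the proof being essentially a bookkeeping exercise once the ball identity $\ga(B_\rho(x,y))=\mu_X(B_\rho(x))$ is noted.
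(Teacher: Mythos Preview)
Your proof is correct and follows essentially the same approach as the paper: decompose $L^2(X\times Y,\ga)$ into the pullback subspace and its orthogonal complement, show the restriction to the first is unitarily equivalent to $\Dr_X$, and show the second is the $\rho^{-2}$-eigenspace. The only minor difference is that you invoke disintegration of $\ga$ along $\pi$ to characterize $W$, whereas the paper avoids this by using the equivalent condition $\int_{A\times Y} u\,d\ga=0$ for all Borel $A\subset X$, which suffices directly for $A=B_\rho(x)$.
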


\begin{proof}
Consider a subspace $L\subset L^2(X\times Y,\ga)$
given by $L=\pi^*_X(L^2(X))$ where $\pi_X\co X \times Y \to X$ is the coordinate projection.
In other words, $L$ consists of functions 
which are constant on every fiber $\{x\}\times Y$, $x\in X$.
Due to \eqref{4.21.12}, $\pi_X^*$ is a Hilbert space isomorphism
between $L^2(X)$ and $L$.
We decompose $L^2(X\times Y,\ga)$
into a direct sum $L\oplus L^\perp$.
Loosely speaking, $L^\perp$ consists of functions 
which are orthogonal to constants in every fiber.
More precisely, if $u\in L^\perp$ then
\be\label{e:splitting1}
 \int_{A\times Y} u\,d\ga = 0
\ee
for every Borel set $A\subset X$.

The statement of the lemma is a consequence of
the following three facts:
\begin{enumerate}
 \item[(1)]  $L$ and $L^\perp$ are invariant under $\Dr_{X_\ga}$;
 \item[(2)] $\pi_X^*$ provides an equivalence between
$\Dr_X$ and $\Dr_{X_\ga}|_L$;
 \item[(3)] for every $u\in L^\perp$ we have $\Dr_{X_\ga}u=\rho^{-2}u$.
\end{enumerate}

To prove these facts, observe that a $\rho$-ball $B_\rho^{X_\ga}(x,y)$
of the semi-metric $d_{X|X\times Y}$ is of the form
$$
 B_\rho^{X_\ga}(x,y) = B_\rho^X(x) \times Y .
$$
Hence for a function $u=\pi_X^*(v)\in L$, where $v\in L^2(X)$, we have
\begin{align*}
 \Dr_{X_\ga}u(x,y) 
 &= \frac{\rho^{-2}}{\ga(B^X_\rho(x)\times Y)}\int_{B^X_\rho(x)\times Y} [u(x,y)-u(x_1,y_1)] \,d\ga(x_1,y_1) \\
 &= \frac{\rho^{-2}}{\mu_X(B^X_\rho(x))}\int_{B^X_\rho(x)} [v(x)-v(x_1)] \,d\mu_X(x_1)
 = \Dr_X v(x)
\end{align*}
where the second identity follows from \eqref{4.21.12}.
Thus $\Dr_{X_\ga}(\pi_X^*(v))=\pi_X^*(\Dr_X v)$,
proving (2) and the first part of (1).

For every $u\in L^\perp$ we have
\begin{align*}
 \Dr_{X_\ga}u(x,y)
 &= \frac{\rho^{-2}}{\ga(B^X_\rho(x)\times Y)}\int_{B^X_\rho(x)\times Y} [u(x,y)-u(x_1,y_1)] \,d\ga(x_1,y_1) \\
 &= \rho^{-2} u(x,y) -\frac{\rho^{-2}}{\ga(B^X_\rho(x)\times Y)}\int_{B^X_\rho(x)\times Y} u(x_1,y_1) \,d\ga(x_1,y_1) \\
 &= \rho^{-2} u(x,y)
\end{align*}
where the last identity follows from \eqref{e:splitting1}.
This proves (3) and the second part of (1).
\end{proof}

The next lemma serves the step where we handle
the difference between distances.

\begin{lemma}\label{l:distance-change}
Let $X_1=(X,d_1,\mu)$, $X_2=(X,d_2,\mu)$
be two mm-spaces with the same
point set $X$ and measure $\mu$. 
Let $\Lambda\ge 1$, $0<\ep\le\rho/2$,
and assume that $X_1$, $X_2$ satisfy the conditions
$SLV(\Lambda,\rho,\ep)$ and $BIV(\Lambda,\rho,\ep)$.
Also assume that
\be\label{e:dist-error}
 |d_1(x,y)-d_2(x,y)| \le \ep
\ee
for all $x,y\in\supp(\mu)$.
Then, for every $k\in\N$,
\be\label{e:la-change}
 (1+C\ep/\rho)^{-1}\le\frac{\la_k(X_2,\rho)}{\la_k(X_1,\rho)} \le 1+C\ep/\rho
\ee
where $C$ is a constant depending only on $\Lambda$.
\end{lemma}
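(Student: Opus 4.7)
The plan is to use the min-max formula \eqref{e:minmax} directly: since $X_1$ and $X_2$ share the same underlying set $X$ and the same measure $\mu$, we have a common Hilbert space $L^2(X)$, and the min-max over $k$-dimensional subspaces $H^k\subset L^2(X)$ is taken over the same family for both spaces. It therefore suffices to show that the Rayleigh quotients satisfy
\be\label{e:plan-rayleigh}
(1+C\ep/\rho)^{-1}\,\frac{D^\rho_{X_1}(u)}{\|u\|_{X_1^\rho}^2}
\;\le\; \frac{D^\rho_{X_2}(u)}{\|u\|_{X_2^\rho}^2}
\;\le\; (1+C\ep/\rho)\,\frac{D^\rho_{X_1}(u)}{\|u\|_{X_1^\rho}^2}
\ee
for every $u\in L^2(X)$; then \eqref{e:la-change} follows at once.

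First I would control the denominator. Since $|d_1-d_2|\le\ep$, for every $x$ the ball $B^{d_2}_\rho(x)$ is sandwiched between $B^{d_1}_{\rho-\ep}(x)$ and $B^{d_1}_{\rho+\ep}(x)$ (and symmetrically with $d_1,d_2$ swapped). Applying $SLV(\Lambda,\rho,\ep)$ to both $X_1$ and $X_2$ one gets $\mu(B^{d_1}_\rho(x))$ and $\mu(B^{d_2}_\rho(x))$ comparable within a factor $1+C\ep/\rho$; plugging this into \eqref{e:L2norm} gives the analogous bound for $\|\cdot\|_{X_i^\rho}^2$.

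The more substantial step is comparing the Dirichlet forms \eqref{e:dirichlet}. Because the integrands agree but the integration domains $\{d_i(x,y)<\rho\}$ differ, the symmetric difference is contained in the ``shell'' $S=\{(x,y):\rho-\ep\le d_1(x,y)<\rho+\ep\}$, and it suffices to show
\be\label{e:plan-shell}
\iint_{S}(u(x)-u(y))^2\,d\mu(x)d\mu(y)\;\le\;C\,\frac{\ep}{\rho}\,D^\rho_{X_1}(u).
\ee
This is where the BIV condition enters. For each pair $(x,y)\in S$, the BIV assumption guarantees $\mu(B^{d_1}_\rho(x)\cap B^{d_1}_\rho(y))\ge\Lambda^{-1}\mu(B^{d_1}_{\rho+\ep}(x))$, so averaging the elementary inequality $(u(x)-u(y))^2\le 2(u(x)-u(z))^2+2(u(z)-u(y))^2$ over $z\in B^{d_1}_\rho(x)\cap B^{d_1}_\rho(y)$ yields a pointwise bound on $(u(x)-u(y))^2$. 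Substituting this bound into the left-hand side of \eqref{e:plan-shell} and applying Fubini, the variable played by $y$ (for the first summand) or by $x$ (for the second) is constrained to the shell $B^{d_1}_{\rho+\ep}(\cdot)\setminus B^{d_1}_{\rho-\ep}(\cdot)$, whose measure is controlled by the SLV condition by $C(\ep/\rho)\mu(B^{d_1}_\rho(\cdot))$; the remaining double integral is a subset of the domain defining $D^\rho_{X_1}(u)$.

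The main technical obstacle is the Fubini swap in the three-point interpolation, because the weight $1/\mu(B^{d_1}_{\rho+\ep}(x))$ depends on the outer variable while we want to end up with an estimate homogeneous in $D^\rho_{X_1}(u)$. This is handled by using $SLV(\Lambda,\rho,\ep)$ to replace $\mu(B^{d_1}_{\rho+\ep}(x))$ by $\mu(B^{d_1}_\rho(x))$ up to a factor $1+C\ep/\rho$, so that the averaging weight cancels the ambient measure of $z$ when we reinterpret the resulting double integral as an integral over pairs $(x,z)$ or $(y,z)$ with $d_1<\rho$. After this bookkeeping, \eqref{e:plan-shell} follows with $C=C(\Lambda)$, and combined with the norm comparison this yields \eqref{e:plan-rayleigh} and therefore the claim.
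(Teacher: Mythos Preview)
Your proposal is correct and mirrors the paper's proof: compare $\|\cdot\|_{X_i^\rho}$ via $SLV$, then compare the Dirichlet forms by bounding the shell integral using the $BIV$ three-point interpolation together with $SLV$ for the layer measure, and finish with \eqref{e:minmax}. The only refinement is that the paper works with the one-sided layer $L=\{\rho\le d_1<\rho+\ep\}$ (bounding $D_2(u)\le D_1(u)+\iint_L$ and then swapping $X_1,X_2$) rather than your two-sided shell $S$, since $SLV(\Lambda,\rho,\ep)$ as stated controls $\mu(B_{\rho+\ep}\setminus B_\rho)$ but not $\mu(B_\rho\setminus B_{\rho-\ep})$; with that adjustment your bookkeeping goes through exactly as in the paper.
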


\begin{proof}
We may assume that \eqref{e:dist-error} holds for all $x,y\in X$,
otherwise just replace $X$ by $\supp(\mu)$.
We estimate the eigenvalues by means of the min-max formula \eqref{e:minmax}.
For $i=1,2$, let $B^i_\rho(x)$ denote the $\rho$-ball of $d_i$ centered at $x\in X$,
$\|\cdot\|_i=\|\cdot\|_{X_i^\rho}$  (see \eqref{e:L2norm}),
and $D_i=D^\rho_{X_i}$ (see \eqref{e:dirichlet}). 
The only difference as we pass from $X_1$ to $X_2$
is that the balls $B^i_\rho(x)$ are different.

The assumption \eqref{e:dist-error} implies that $B^1_\rho(x)\subset B^2_{\rho+\ep}(x)$
for every $x\in X$.
This and the condition $SLV(\Lambda,\rho,\ep)$ for $X_2$ imply that
$$
 \frac{\mu(B^1_\rho(x))}{\mu(B^2_\rho(x))}
 \le 1+\frac{\mu(B^2_{\rho+\ep}(x)\setminus B^2_\rho(x))}{\mu(B^2_\rho(x))}
 \le 1+\Lambda\ep/\rho .
$$
This and \eqref{e:L2norm} imply that
\be\label{e:dist-change0}
 \|u\|^2_1 \le (1+\Lambda\ep/\rho) \|u\|^2_2
\ee
for every $u\in L^2(X)$.

For the Dirichlet forms we have
\begin{align*}
 D_2(u)  &= \iint_{d_2(x, y) <\rho} |u(x)-u(y)|^2 \, d\mu(x) d\mu(y) \\
 &\leq \iint_{d_1(x, y) <\rho+\ep} |u(x)-u(y)|^2 \, d\mu(x) d\mu(y) \\
 &= D_1(u) +\iint_{L} |u(x)-u(y)|^2 \, d\mu(x) d\mu(y),
\end{align*}
where 
$$
L=\{(x,y)\in X\times X : \rho \le d_1(x, y) <\rho+\ep \} .
$$
Hence
\be\label{e:dist-change1}
 D_2(u)-D_1(u) \le \iint_{L} |u(x)-u(y)|^2 \, d\mu(x) d\mu(y) .
\ee

Let us estimate the right-hand side of \eqref{e:dist-change1}.
For every $(x,y)\in L$ consider the set
$U(x,y)=B_\rho^1(x) \cap B_\rho^1(y)$.
Recall that
\be\label{e:dist-change2}
 \mu(U(x,y)) \ge \Lambda^{-1} \max\{\mu(B^1_\rho(x)),\mu(B^1_\rho(y))\}
\ee
by the condition $BIV(\Lambda,\rho,\ep)$ for $X_1$.
For every $z \in U(x, y)$ we have
$$
|u(x)-u(y)|^2 \leq 2 (|u(x)-u(z)|^2+|u(z)-u(y)|^2).
$$
Integrating this inequality and taking into account \eqref{e:dist-change2} yields that
\begin{align*}
|u(x)-u(y)|^2 &\le \frac{2}{\mu_1(U(x,y))} \int_{U(x, y)} (|u(x)-u(z)|^2+|u(z)-u(y)|^2) \, d\mu_1(z) \\
&\le 2\Lambda \bigl(Q(x)+Q(y)\bigr)
\end{align*}
where
$$
 Q(x) = \frac1 {\mu(B^1_\rho(x))} \int_{B^1_\rho(x)} |u(x)-u(z)| ^2\, d\mu_1(z) .
$$
This and \eqref{e:dist-change1} imply that
\begin{align*}
D_2(u)-D_1(u) 
&\le 2\Lambda \iint_L (Q(x)+Q(y))\,d\mu(x)d\mu(y) \\
&= 4\Lambda \iint_L Q(x)\,d\mu(x)d\mu(y) \\
&= 4\Lambda \int_X \mu(B^1_{\rho+\ep}(x)\setminus B^1_\rho(x))\, Q(x) \,d\mu(x) \\
&\le \frac{4\Lambda^2\ep}{\rho} \int_X \mu(B^1_\rho(x))\, Q(x) \,d\mu(x) \\
&=\frac{4\Lambda^2\ep}{\rho} D_1(u)
\end{align*}
where the second inequality follows from 
the condition $SLV(\Lambda,\rho,\ep)$ for $X_1$.
Thus
\be\label{e:dist-change-D}
 D_2(u) \le (1+4\Lambda^2\ep/\rho) D_1(u) .
\ee
This and \eqref{e:dist-change0} imply that
$$
  \frac{D_2(u)}{\|u\|^2_2} \le (1+\Lambda\ep/\rho)(1+4\Lambda^2\ep/\rho) \frac{D_1(u)}{\|u\|^2_1}
$$
for every $u\in L^2(X)\setminus\{0\}$.
By the min-max formula \eqref{e:minmax} this implies the second inequality
in \eqref{e:la-change} with $C=\Lambda+4\Lambda^2+4\Lambda^3$. 
Then the first inequality in  \eqref{e:la-change}
follows by swapping $X_1$ and $X_2$.
\end{proof}

The following proposition deals with the case of $\de=0$
of Theorem \ref{t:stability}.

\begin{proposition}
\label{p:ep-0-close}
For every $\Lambda>0$ there exists $C=C(\Lambda)>0$
such that the following holds.
Let $0<\ep\le\rho/4$ and 
let $X$, $Y$ be mm-spaces that are $(\ep,0)$-close
and satisfy the conditions 
$SLV(\Lambda,\rho,2\ep)$ and $BIV(\Lambda,\rho,2\ep)$.
Then
$$
  (1+C\ep/\rho)^{-1} \le \frac{\la_k(X,\rho)}{\la_k(Y,\rho)} \le 1+C\ep/\rho
$$
for every $k\in\N$ such that $\la_k(X,\rho)<(1+C\ep/\rho)^{-1}\rho^{-2}$.
\end{proposition}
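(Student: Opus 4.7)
\medskip

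\noindent\textbf{Proof proposal for Proposition \ref{p:ep-0-close}.}
The plan is to reduce the comparison between $X$ and $Y$ to a pure change of metric on a fixed measure space, so that Lemma~\ref{l:distance-change} applies directly. This is done via the ``splitting'' construction from Lemma~\ref{l:splitting}.

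First I would invoke Corollary~\ref{c:mm-wasserstein} in the case $\de=0$ to obtain a measure coupling $\ga$ between $(X,\mu_X)$ and $(Y,\mu_Y)$ such that
$$
 |d_X(x_1,x_2)-d_Y(y_1,y_2)| \le 2\ep
 \quad\text{for all }(x_1,y_1),(x_2,y_2)\in\supp(\ga).
$$
Form the two ``split'' mm-spaces on the common space $(X\times Y,\ga)$, namely
$X_\ga=(X\times Y,\, d_{X\mid X\times Y},\,\ga)$ and $Y_\ga=(X\times Y,\, d_{Y\mid X\times Y},\,\ga)$,
where $d_{X\mid X\times Y}$ and $d_{Y\mid X\times Y}$ are the pull-back semi-metrics through the two coordinate projections. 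By Lemma~\ref{l:splitting} applied to each side,
$$
 \la_k(X,\rho)=\la_k(X_\ga,\rho),\qquad \la_k(Y,\rho)=\la_k(Y_\ga,\rho)
$$
for every $k$ with $\la_k(X,\rho)<\rho^{-2}$ (respectively $\la_k(Y,\rho)<\rho^{-2}$), since the extra eigenvalue introduced by the splitting is exactly $\rho^{-2}$ and we only compare eigenvalues strictly below this value.

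Next I would verify that $X_\ga$ and $Y_\ga$ inherit the conditions $SLV(\Lambda,\rho,2\ep)$ and $BIV(\Lambda,\rho,2\ep)$ from $X$ and $Y$. This is essentially automatic: a $\rho$-ball in $X_\ga$ centered at $(x,y)$ has the cylinder form $B^X_\rho(x)\times Y$, so
$$
 \ga\bigl(B^{X_\ga}_\rho(x,y)\bigr)=\mu_X\bigl(B^X_\rho(x)\bigr),
$$
and likewise for spherical layers and intersections of balls; the analogous identity holds for $Y_\ga$. Hence SLV and BIV transfer from $X$ to $X_\ga$ and from $Y$ to $Y_\ga$ without any loss in the constant~$\Lambda$.

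Finally, $X_\ga$ and $Y_\ga$ have the same underlying set $X\times Y$ and the same measure $\ga$, and on $\supp(\ga)$ their semi-metrics satisfy
$$
 \bigl|d_{X\mid X\times Y}-d_{Y\mid X\times Y}\bigr|\le 2\ep
$$
by the choice of $\ga$. Applying Lemma~\ref{l:distance-change} to the pair $(X_\ga,Y_\ga)$ with $\ep$ replaced by $2\ep$ (so that the needed SLV/BIV is at parameter $2\ep$, which we have arranged) yields
$$
 (1+C\ep/\rho)^{-1}\le\frac{\la_k(Y_\ga,\rho)}{\la_k(X_\ga,\rho)}\le 1+C\ep/\rho,
$$
with a constant $C=C(\Lambda)$; combining with the identifications from Lemma~\ref{l:splitting} gives the proposition, subject to the eigenvalue staying below $\rho^{-2}$.

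The main technical nuisance, rather than any single hard step, is bookkeeping around the semi-metric: one must be comfortable that Definition~\ref{d:rho-laplacian}, $SLV$, $BIV$, and Lemmas~\ref{l:splitting}--\ref{l:distance-change} all work with semi-metrics as stated in Section~\ref{sec:prelim}, since the pull-back metrics $d_{X\mid X\times Y}$ and $d_{Y\mid X\times Y}$ are only semi-metrics on the fiber directions. Once this is accepted, the argument is a direct chaining of the three tools already proved.
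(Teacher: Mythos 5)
Your proposal is correct and follows essentially the same route as the paper's own proof: Corollary~\ref{c:mm-wasserstein} with $\de=0$ to get the coupling $\ga$, the splitting Lemma~\ref{l:splitting} to identify $\la_k(X,\rho)$ with $\la_k(X_\ga,\rho)$ and $\la_k(Y,\rho)$ with $\la_k(Y_\ga,\rho)$ below $\rho^{-2}$, inheritance of $SLV$/$BIV$ by the cylinder structure of balls, and Lemma~\ref{l:distance-change} with $2\ep$ in place of $\ep$. Your handling of the eigenvalue threshold matches the paper's argument, so no changes are needed.
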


\begin{proof}
By Corollary \ref{c:mm-wasserstein}, there exists a measure coupling
$\ga$ between $X$ and~$Y$ satisfying \eqref{e:mm-wasserstein2}.
With this coupling, we construct mm-spaces 
$$
X_\ga=(X\times Y,d_{X|X\times Y},\ga)
\quad\text{and}\quad
Y_\ga=(X\times Y,d_{Y|X\times Y},\ga)
$$
as explained in the text before Lemma \ref{l:splitting}.
Then Lemma \ref{l:splitting} implies that 
$\la_k(X_\ga,\rho)=\la_k(X,\rho)$
provided that $\la_k(X,\rho)<\rho^{-2}$.

The spaces $X_\ga$ and $Y_\ga$ inherit the conditions $SLV(\Lambda,\rho,2\ep)$
and $BIV(\Lambda,\rho,2\ep)$ from $X$ and $Y$.
Due to \eqref{e:mm-wasserstein2}, $X_\ga$ and $Y_\ga$
satisfy the assumptions of Lemma \ref{l:distance-change}
with $2\ep$ in place of $\ep$.
Hence
$$
  (1+C\ep/\rho)^{-1} \le \frac{\la_k(X_\ga,\rho)}{\la_k(Y_\ga,\rho)} \le 1+C\ep/\rho 
$$
where $C$ is a constant depending only on $\Lambda$.
If $\la_k(X_\ga,\rho)<(1+C\ep/\rho)^{-1}\rho^{-2}$, this implies that $\la_k(Y_\ga,\rho)<\rho^{-2}$
and therefore $\la_k(Y_\ga,\rho)=\la_k(Y,\rho)$ by Lemma~\ref{l:splitting}.
The proposition follows.
\end{proof}

\begin{proof}[\bf Proof of Theorem \ref{t:stability}]
Let $X$, $Y$ be as in Theorem \ref{t:stability}.
By Corollary \ref{c:mm-wasserstein},
there exist measures $\tilde\mu_X$ and $\tilde\mu_Y$
satisfying \eqref{e:mm-wasserstein1} 
and such that the mm-spaces $\tilde X=(X,d_X,\tilde\mu_X)$ and $\tilde Y=(Y,d_Y,\tilde\mu_Y)$
are $(\ep,0)$-close.
By \eqref{e:la-change-mu} we have
\be\label{e:tildeX}
 e^{-2\de} \le \frac{\la_k(\tilde X,\rho)}{\la_k(X,\rho)} \le e^{2\de}
\ee
and
\be\label{e:tildeY}
 e^{-2\de} \le \frac{\la_k(\tilde Y,\rho)}{\la_k(Y,\rho)} \le e^{2\de} .
\ee

Now we estimate the ratio $\la_k(\tilde X,\rho)/\la_k(\tilde Y,\rho)$.
Due to \eqref{e:mm-wasserstein1}, 
$\tilde X$ and $\tilde Y$
satisfy the conditions $SLV(\Lambda',\rho,2\ep)$ and $BIV(\Lambda',\rho,2\ep)$
with $\Lambda'=e^\de\Lambda$.
By Proposition \ref{p:ep-0-close} applied to $\tilde X$ and $\tilde Y$
we have
\be\label{e:tilderatio}
  (1+C\ep/\rho)^{-1} \le \frac{\la_k(\tilde X,\rho)}{\la_k(\tilde Y,\rho)} \le 1+C\ep/\rho 
\ee
provided that
$%\be\label{e:tildebound}
 \la_k(\tilde X,\rho) < (1+C\ep/\rho)^{-1}\rho^{-2} 
$. %\ee
Here $C$ is a constant depending only on $\Lambda$.
The desired estimate \eqref{e:main-estimate}
follows from \eqref{e:tilderatio}, \eqref{e:tildeX} and \eqref{e:tildeY}.
\end{proof}

\begin{proof}[\bf Proof of Theorem \ref{t:second}]
Let $X=(X,d,\mu)$ and $X_n=(X_n,d_n,\mu_n)$ be as in Theorem \ref{t:second}.
The Bishop--Gromov condition \eqref{e:bishop-gromov}
implies that $\mu$ has full support.
By Proposition \ref{p:fukaya} it follows that
$X_n$ is $(\ep_n,\de_n)$-close to $X$ where $\ep_n,\de_n\to 0$.

Fix $\rho>0$ and assume that $\ep_n<\rho/24$.
As explained after Definition \ref{d:BIV},
the assumption that $d$ is a length metric and \eqref{e:bishop-gromov}
imply that $X$ satisfies $SLV(\Lambda',r,\ep)$
and $BIV(\Lambda',r,\ep)$ for all $r>0$ and $\ep\le r/2$,
where $\Lambda'$ depends only on $\Lambda$.
By Lemma \ref{l:conditions-stable} it follows that
$X_n$ satisfies $SLV(\Lambda'',\rho,2\ep_n)$
and $BIV(\Lambda'',\rho,2\ep_n)$ 
for some $\Lambda''$ depending only on $\Lambda$.
Now Theorem \ref{t:stability} implies that, for some $C=C(\Lambda)$,
$$
  e^{-4\de_n}(1+C\ep_n/\rho)^{-1}
  \le \frac{\la_k(X_n,\rho)}{\la_k(X,\rho)} 
  \le e^{4\de_n}(1+C\ep_n/\rho)
$$
for all $n,k$ such that 
$\la_k(X,\rho)<e^{-4\de_n}(1+C\ep_n/\rho)^{-1}\rho^{-2}$.
Thus $\la_k(X_n,\rho)\to\la_k(X,\rho)$ as $n\to\infty$.
\end{proof}

\section{Transport of  $\rho$-Laplacians}
\label{sec:TXY}

In this section we further analyze the structures
appeared in the proof of Theorem \ref{t:stability}.
Our goal is to construct a map $T_{XY}\co L^2(X)\to L^2(Y)$
which shows ``almost equivalence'' of
$\rho$-Laplacians $\Dr_X$ and $\Dr_Y$.
See Proposition \ref{p:TXY} for a precise formulation.

Let $X$, $Y$ be as in Theorem \ref{t:stability}.
As in the proof of Theorem \ref{t:stability},
let $\ga$ be a measure coupling 
provided by Corollary \ref{c:mm-wasserstein} and
$\tilde\mu_X$, $\tilde\mu_Y$ marginals of~$\ga$.
The coordinate projection $\pi_X\co X\times Y\to X$
determines two maps
$$
I_X\co L^2(X,\tilde\mu_X)\to L^2(X\times Y,\ga)
$$
and 
$$
P_X\co L^2(X\times Y,\ga)\to L^2(X,\tilde\mu_X)
$$
which are dual to each other.
Namely $I_X=\pi_X^*$ is a map given by
$$
 (I_Xu)(x,y) = u(x), \qquad u\in L^2(X),\ x\in X,\ y\in Y .
$$
Note that $I_X$ is an isometric embedding of $L^2(X,\tilde\mu_X)$
to $L^2(X\times Y,\ga)$. 
Let $L_X\subset L^2(X\times Y,\ga)$ be the image of $I_X$.
Then $P_X$ is the composition of the orthogonal projection
onto $L_X$ and the map $I_X^{-1}\co L_X\to L^2(X)$.

Loosely speaking, $P_X$ sends each function on $X\times Y$
to the family of its average values over the fibers $\{x\}\times Y$, $x\in X$.
More precisely, by disintegration theorem (see \cite{Pachl} or  
\cite[Theorem 452I]{Fremlin}), for a.e.\ $x\in X$, there is a measure $\nu_x$ on $Y$
such that
$$
\gamma(A \times B)= \int_A \nu_x(B) d\tilde \mu_X,\quad A \subset X, B \subset Y.
$$
Then, for a.e.\ $x\in X$,
\be\label{e:PX}
 (P^{}_X\varphi)(x) = 
 \int_{ Y} \phi(x, y)\, d\nu_x,
 \qquad x\in X.
\ee
 Then, since
$\tilde\mu_X$ is a marginal measure of $\ga$, \eqref{e:PX} implies that $P_X \circ I_X=id_X$.

Similarly one defines maps $I_Y$, $P_Y$ and a subspace $L_Y$.
We introduce a map $T_{XY}\co L^2(X)\to L^2(Y)$
by $T_{XY}=P_Y\circ I_X$.
By \eqref{e:PX}, for $u \in L^2(X)$,
$$
 (T^{}_{XY}u)(y) = \int_{X} u(x)  \, d\nu_y,
$$
where $\nu_y$ is defined similarly to $\nu_x$ and the 
integral in the right-hand side exists for a.e.~$y\in Y$.
The main result of this section is the following proposition.

\begin{proposition}
\label{p:TXY}
For every $\Lambda>0$ there exists $C=C(\Lambda)>0$ such that the following holds.
Let $X$, $Y$, $\rho$, $\ep$, $\Lambda$ be as in Theorem \ref{t:stability}.
Then for every $u\in L^2(X)$ the map $T_{XY}$ defined above satisfies
\be
\label{e:TXY01}
 A^{-1}\|u\|^2_{X^\rho} - A\rho^2 D^\rho_X(u)
 \le\|T^{}_{XY}u\|^2_{Y^\rho} \le A\|u\|^2_{X^\rho} ,
\ee
\be\label{e:TXY02}
 D^\rho_Y(T^{}_{XY}u) \le A D^\rho_X(u) ,
\ee
and
\be\label{e:TXY03}
 \| T_{YX}(T_{XY}u) - u \|^2_{X^\rho} \le A\rho^2 D^\rho_X(u) ,
\ee
where
$$
 A = e^{\de}(1+C\ep/\rho) .
$$
\end{proposition}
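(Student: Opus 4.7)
The plan is to run the argument inside the common Hilbert space $L^2(X\times Y, \ga)$, exploiting the factorization $T_{XY} = P_Y \circ I_X$ together with its counterpart $T_{YX} = P_X \circ I_Y$. A routine comparison, as in \eqref{e:la-change-mu}, lets us replace $\mu_X, \mu_Y$ by $\tilde\mu_X, \tilde\mu_Y$ throughout at the price of an overall factor $e^{O(\de)}$ that is absorbed into $A$; so everything can be proved for $\tilde X = (X,d_X,\tilde\mu_X)$ and $\tilde Y = (Y,d_Y,\tilde\mu_Y)$.

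Two structural facts drive the argument. First, on $X \times Y$ consider the two measures $\ga^\rho_X$ and $\ga^\rho_Y$ whose densities with respect to $\ga$ are $\rho^2 \tilde\mu_X(B^X_\rho(x))$ and $\rho^2 \tilde\mu_Y(B^Y_\rho(y))$; these are precisely the measures that turn $I_X, I_Y$ into isometries onto $L_X, L_Y$ from $L^2(\tilde X, \mu^\rho_{\tilde X}), L^2(\tilde Y, \mu^\rho_{\tilde Y})$, intertwining $D^\rho_{\tilde X}$ with $D^\rho_{X_\ga}$ and $D^\rho_{\tilde Y}$ with $D^\rho_{Y_\ga}$ on these subspaces. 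Because the density of $\ga^\rho_X$ with respect to $\ga$ depends only on $x$, the fiber-averaging projection onto $L_X$ coincides in $L^2(\ga)$ and in $L^2(\ga^\rho_X)$, and similarly for $L_Y$ and $\ga^\rho_Y$; so $T_{XY}$ is realized by an orthogonal projection in the appropriate weighted Hilbert space. Second, the coupling \eqref{e:mm-wasserstein2} forces $B^{X_\ga}_\rho(x,y)$ and $B^{Y_\ga}_\rho(x,y)$ to agree on $\supp(\ga)$ up to a $2\ep$-buffer; the SLV condition then gives $\ga^\rho_Y \le (1+C\ep/\rho)\,\ga^\rho_X$ on $\supp(\ga)$ (and the reverse), while SLV and BIV combined give, by the computation in Lemma~\ref{l:distance-change},
\[
 D^\rho_{Y_\ga}(w) \le (1+C\ep/\rho)\, D^\rho_{X_\ga}(w)
\]
for every $w \in L^2(X\times Y, \ga)$.

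With these ingredients the three estimates follow from a single orthogonal decomposition. Decompose $I_X u = I_Y(T_{XY}u) + w^\perp$ in $L^2(\ga^\rho_Y)$, with $w^\perp \in L_Y^\perp$. Applying Lemma~\ref{l:splitting} to $Y_\ga$ gives $\Dr_{Y_\ga}|_{L_Y^\perp} = \rho^{-2}\,\mathrm{id}$, so $D^\rho_{Y_\ga}(w^\perp) = \rho^{-2}\|w^\perp\|^2_{Y_\ga^\rho}$ and Pythagoras yields
\[
 \|w^\perp\|^2_{Y_\ga^\rho} = \rho^2\bigl(D^\rho_{Y_\ga}(I_X u) - D^\rho_{Y_\ga}(I_Y T_{XY}u)\bigr) \le (1+C\ep/\rho)\rho^2 D^\rho_X(u).
\]
Now \eqref{e:TXY01} follows from $\|T_{XY}u\|^2_{\tilde Y^\rho} = \|I_X u\|^2_{\ga^\rho_Y} - \|w^\perp\|^2_{\ga^\rho_Y}$ together with the density sandwich $(1+C\ep/\rho)^{-1}\|u\|^2_{\tilde X^\rho} \le \|I_X u\|^2_{\ga^\rho_Y} \le (1+C\ep/\rho)\|u\|^2_{\tilde X^\rho}$; \eqref{e:TXY02} follows because orthogonal projection onto $L_Y$ decreases $D^\rho_{Y_\ga}$ (by the same Pythagoras argument applied to the form, using $\Dr_{Y_\ga}$-invariance of $L_Y$ and $L_Y^\perp$), and then one transfers between $D^\rho_{Y_\ga}$ and $D^\rho_{X_\ga}$ by the $(1+C\ep/\rho)$ inequality; finally, \eqref{e:TXY03} uses $T_{YX}T_{XY}u = P_X(I_X u - w^\perp) = u - P_X(w^\perp)$, so
\[
 \|u - T_{YX}T_{XY}u\|^2_{\tilde X^\rho} \le \|w^\perp\|^2_{\ga^\rho_X} \le (1+C\ep/\rho)\|w^\perp\|^2_{\ga^\rho_Y}
\]
and the previous bound closes the loop.

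The main obstacle is not analytic but bookkeeping: one must verify that the projections, originally defined through the unweighted measure $\ga$, remain orthogonal projections in the weighted spaces $L^2(\ga^\rho_X)$ and $L^2(\ga^\rho_Y)$ (this is where the fiber-wise constancy of the two densities is essential), and then carefully track how each $e^{\de}$ and $(1+C\ep/\rho)$ factor accumulates as one passes between $\mu$ and $\tilde\mu$, between the two semi-metrics on $X\times Y$, and between $L^2(\ga)$ and its weighted variants. Beyond this, no new ideas beyond those in Lemmas~\ref{l:splitting} and~\ref{l:distance-change} are required.
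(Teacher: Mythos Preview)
Your proposal is correct and follows essentially the same route as the paper's proof: pass to $\tilde X,\tilde Y$ to absorb the $e^\de$ factors, work in $L^2(X\times Y,\ga)$ with the two semi-metric structures $\tilde X_\ga,\tilde Y_\ga$, decompose $I_Xu$ into its $L_Y$ and $L_Y^\perp$ components, use Lemma~\ref{l:splitting} to identify $\Dr_{\tilde Y_\ga}|_{L_Y^\perp}=\rho^{-2}\mathrm{id}$, and transfer norms and Dirichlet forms between the $X$- and $Y$-structures via the estimates of Lemma~\ref{l:distance-change}. Your explicit remark that the fiberwise constancy of the density $\rho^2\tilde\mu_X(B^X_\rho(x))$ makes the $L^2(\ga)$-projection onto $L_X$ coincide with the $L^2(\ga^\rho_X)$-projection is exactly what the paper records in its bullet list of properties inherited from Lemma~\ref{l:splitting}.
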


We are interested in the situation when $\de$ and $\ep/\rho$ are small.
Then $A$ is close to~1 and the cumbersome formulas
\eqref{e:TXY01} and \eqref{e:TXY03} can be informally
interpreted in the following way.
At not too high energy levels
(that is, if the Dirichlet energy of a unit vector $u$
is substantially smaller than $\rho^{-2}$),
the operator $T_{XY}$ almost preserves the norm and the inner product by \eqref{e:TXY01}
and $T_{YX}\circ T_{XY}$ is close to identity by \eqref{e:TXY03}.

\begin{remark}
Let us note that a construction of a similar type 
has been introduced by Sturm \cite{Sturm06-I}, see in particular
formula (4.36) in \cite[Sec.~4.5]{Sturm06-I}. % ``Stability under convergence''.
There the author defines a map $Q'$ which is essentially the same as our $T_{XY}$,
in different notation.
In \cite{Sturm06-I}, Sturm was mostly after Ricci curvature bounds. 
Later Gigli \cite {Gi} used similar 
techniques with other problems in mind.
We are after quite different goals 
and therefore need different estimates and applications of the construction.
\end{remark}

\begin{proof}[Proof of Proposition \ref{p:TXY}]
Let $\ga$, $\tilde\mu_X$, $\tilde\mu_Y$ be as above.
Consider mm-spaces $\tilde X=(X,d_X,\tilde\mu_X)$ and
$\tilde Y=(Y,d_Y,\tilde\mu_Y)$,
the corresponding $\rho$-Laplacians,
norms $\|\cdot\|_{\tilde X^\rho}$ and $\|\cdot\|_{\tilde Y^\rho}$,
and Dirichlet forms $D^\rho_{\tilde X}$ and $D^\rho_{\tilde Y}$
(see \eqref{e:L2norm} and \eqref{e:dirichlet}).

As in Section \ref{sec:stability} we equip $X\times Y$ with
two semi-distances $d_{X|X\times Y}$ and $d_{Y|X\times Y}$
and denote by $\tilde X_\ga$ and $\tilde Y_\ga$ the corresponding mm-spaces
(see the proof of Proposition \ref{p:ep-0-close}).
These mm-spaces determine $\rho$-Laplacians
$\Dr_{\tilde X_\ga}$ and $\Dr_{\tilde Y_\ga}$,
scalar products
$\langle,\rangle_{\tilde X_\ga^\rho}$ and $\langle,\rangle_{\tilde Y_\ga^\rho}$,
and Dirichlet forms $D^\rho_{\tilde X_\ga}$ and $D^\rho_{\tilde Y_\ga}$
(see \eqref{e:L2rho} and \eqref{e:dirichlet}).
The structures introduced above 
satisfy the following properties
(see the proof of Lemma \ref{l:splitting}):
\begin{itemize}
\item 
$L_X$ and $L_X^\perp$ are orthogonal 
with respect to $\langle\cdot,\cdot\rangle_{\tilde X_\ga^\rho}$;
\item 
$I_X$ is an isometric embedding with respect 
to norms $\|\cdot\|_{\tilde X^\rho}$ and $\|\cdot\|_{\tilde X_\ga^\rho}$;
\item 
$I_X$ preserves the Dirichlet form $D^\rho_{\tilde X}$,
that is $D^\rho_{\tilde X_\ga}(I_Xu)=D^\rho_{\tilde X}(u)$
for all $u\in L^2(X)$;
\item
$L_X$ and $L_X^\perp$ are invariant under 
$\Dr_{\tilde X_\ga}$
and hence they are orthogonal with respect to $D^\rho_{\tilde X_\ga}$.
\end{itemize}
Recall that $P_X$ is the composition of the orthogonal projection
to $L_X$ and the map $I_X^{-1}$. Hence $P_X$ does not increase the norms and Dirichlet forms.
Similar properties hold for $Y$ in place of $X$.

As in the proof of Lemma \ref{l:distance-change},
for every $v\in L^2(X\times Y,\ga)$
we have
(see \eqref{e:dist-change0} and \eqref{e:dist-change-D})
\be\label{e:TXY-norm}
  A_1^{-1}\le \|v\|^2_{\tilde X^\rho} \big/ \|v\|^2_{\tilde Y^\rho}
  \le A_1
\ee
and
\be\label{e:TXY-D}
  A_2^{-1}\le D^\rho_{\tilde X}(v) \big/ D^\rho_{\tilde Y}(v)
  \le A_2
\ee
where $A_1=1+\Lambda\ep/\rho$ and $A_2=1+4\Lambda^2\ep/\rho$.

Let $u\in L^2(X)$ and $v=I_X(u)$.
Then
\be
\label{e:TXY1}
 \|T^{}_{XY}u\|^2_{\tilde Y^\rho}
 = \|P_Yv\|^2_{\tilde Y^\rho}
 \le \|v\|^2_{\tilde Y_\ga^\rho}
 \le A_1\|v\|^2_{\tilde X_\ga^\rho} 
 =A_1\|u\|^2_{\tilde X^\rho} .
\ee
Now we estimate $\|T_{XY}u\|^2_{Y^\rho}$ from below.
Decompose $v$ as $v=v_1+v_2$ where $v_1\in L_Y$ and $v_2\in L_Y^\perp$.
As shown in the proof of Lemma \ref{l:splitting},
the $\rho$-Laplacian $\Dr_{\tilde Y_\ga}$ 
acts on $L_Y^\perp$ by multiplication by $\rho^{-2}$.
Hence
\be\label{e:TXY4}
 D^\rho_{\tilde Y_\ga}(v) 
 = D^\rho_{\tilde Y_\ga}(v_1) + D^\rho_{\tilde Y_\ga}(v_2)
 \ge D^\rho_{\tilde Y_\ga}(v_2)
 = \rho^{-2}\|v_2\|^2_{\tilde Y_\ga^\rho}
\ee
and therefore
$$
 \|v_1\|^2_{\tilde Y_\ga^\rho} 
 =\|v\|^2_{\tilde Y_\ga^\rho} - \|v_2\|^2_{\tilde Y_\ga^\rho}
 \ge \|v\|^2_{\tilde Y_\ga^\rho} - \rho^2 D^\rho_{\tilde Y_\ga}(v) .
$$
Thus
\begin{multline}
\label{e:TXY2}
\|T^{}_{XY}u\|^2_{\tilde Y^\rho}
= \|v_1\|^2_{\tilde Y_\ga^\rho}
\ge \|v\|^2_{\tilde Y_\ga^\rho} - \rho^2 D^\rho_{\tilde Y_\ga}(v) \\
\ge A_1^{-1} \|v\|^2_{\tilde X_\ga^\rho}
- A^{}_2\rho^2 D^\rho_{\tilde X_\ga}(v) 
= A_1^{-1} \|u\|^2_{\tilde X^\rho}
- A^{}_2\rho^2 D^\rho_{\tilde X}(u)
\end{multline}
by \eqref{e:TXY-norm} and \eqref{e:TXY-D}.
Now \eqref{e:TXY01} follows from \eqref{e:TXY1}, \eqref{e:TXY2}
and the bounds \eqref{e:mm-wasserstein1} 
for $\tilde\mu_X$ and $\tilde\mu_Y$.

To estimate the Dirichlet form of $T_{XY}u$,
observe that
$$
 D^\rho_{\tilde Y}(T^{}_{XY}u) = D^\rho_{\tilde Y_\ga}(v_1)
 \le D^\rho_{\tilde Y_\ga}(v)
$$
and
\be\label{e:TXY5}
 D^\rho_{\tilde Y_\ga}(v)
 \le A_2 D^\rho_{\tilde X_\ga}(v)
 = A_2 D^\rho_{\tilde X}(u)
\ee
by \eqref{e:TXY-D}. These estimates and \eqref{e:mm-wasserstein1}
imply \eqref{e:TXY02}.

To prove \eqref{e:TXY03}, observe that $I_Y(T_{XY}u)=v_1$
and therefore
$$
T_{YX}(T_{XY}u) - u = P_X(v_1)-u = P_X(v_1-v) = P_X(v_2) .
$$
Further,
$$
 \|P_X(v_2)\|^2_{\tilde X^\rho} \le \|v_2\|^2_{\tilde X_\ga^\rho}
 \le A_1\|v_2\|^2_{\tilde Y_\ga^\rho}
 \le A_1\rho^2 D^\rho_{\tilde Y_\ga}(v)
 \le A_1A_2 \rho^2 D^\rho_{\tilde X}(u)
$$
by \eqref{e:TXY-norm}, \eqref{e:TXY4}, and \eqref{e:TXY5}.
Thus
$$
 \|T_{YX}(T_{XY}u) - u\|^2_{\tilde X^\rho} \le A_1A_2 \rho^2 D^\rho_{\tilde X}(u) .
$$
This and \eqref{e:mm-wasserstein1} imply \eqref{e:TXY03}.
\end{proof}

After we obtained estimates on closeness of eigenvalues
in Theorem \ref{t:stability} we certainly would like to
show that corresponding eigenspaces are also close.

The most naive formulation definitely fails.
If we have an eigenvalue of multiplicity 2
then there is a two-dimensional eigenspace.
Then a small perturbation would generically
result in splitting the eigenspace into two
orthogonal one-dimensional eigenspaces.
An original eigenvector may fail to be close
to either of the new eigenspaces.
It is still close to a linear combination
of new eigenvectors.

In our case we have a similar situation.
Let $u$ be an eigenvector of $\Dr_X$ with eigenvalue $\la$
which is substantially smaller that $\rho^{-2}$.
Then $T_{XY}(u)$ is close to a linear combination
of eigenvectors of $\Dr_Y$ with eigenvalues
close to~$\la$.
We don't give a precise formulation of the statement.
It is a direct reformulation of Theorem 3 in \cite{BIK14}.
The proof is an application of Proposition~\ref{p:TXY}
and some straightforward linear algebra.

\section{Weyl-type estimates}
\label{sec:weyl}

In this section we prove Theorems \ref{t:ess-spectrum}
and \ref{t:many-eigenvalues}.
Theorem \ref{t:ess-spectrum} 
gives us a Weyl-type upper bound on
the number of eigenvalues in a lower part of the spectrum of $\Dr_X$.
Theorem \ref{t:many-eigenvalues} provides a similar lower bound.

To formulate the theorems we need notation for packing numbers.
For a compact metric space $X$ and $r>0$ we denote
by $N_X(r)$ the maximum 
number of points in an $r$-separated set in $X$.
Recall that a set $Y\subset X$ is \emph{$r$-separated}
if $d_X(y_1,y_2)\ge r$ for all $y_1,y_2\in Y$.

For $R>0$, we denote by $\#^\rho_X(R)$
the number of eigenvalues of $\Dr_X$ in the interval $[0,R]$,
counted with multiplicities. Equivalently,
$$
\#_X^\rho(R) = \sup \{ k\in\N : \la_k(X,\rho)\le R \} .
$$
Note that $\#_X^\rho(R)=\infty$ if $R\ge\la_\infty(X,\rho)$.

\begin{theorem}
\label{t:ess-spectrum}
For every $\Lambda\ge 1$ there exists $c=c(\Lambda)>0$
such that the following holds.
Let $X$ be a mm-space satisfying the condition $BIV(\Lambda,\frac56\rho,\frac5{12}\rho)$
and the following restricted doubling condition:
$$
 \mu_X(B_{5\rho/3}) \le \Lambda \mu_X (B_{5\rho/6})
$$
for all $x\in\supp(\mu_X)$.
Then 
$$
 \#_X^\rho(c\rho^{-2}) \le N_X(\rho/24) . 
$$
%In particular, $\la_\infty(X,\rho)> c\rho^{-2}$.
\end{theorem}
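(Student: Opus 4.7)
The plan is to argue by contradiction via the Min--Max formula \eqref{e:minmax}. Suppose $\#_X^\rho(c\rho^{-2}) > N := N_X(\rho/24)$ for some constant $c = c(\Lambda) > 0$ to be chosen. Then there is an $(N+1)$-dimensional subspace $H \subset L^2(X)$ on which $D^\rho_X(u) \le c\rho^{-2}\|u\|^2_{X^\rho}$. I will produce a nonzero $u_0\in H$ whose Dirichlet energy is too large relative to its $X^\rho$-norm, contradicting this bound once $c$ is sufficiently small.

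Fix a maximal $(\rho/24)$-separated set $\{x_1,\dots,x_M\}\subset\supp(\mu_X)$ (so $M\le N$ and the balls $B_{\rho/24}(x_i)$ cover $\supp(\mu_X)$), together with a Voronoi partition $V_1,\dots,V_M$ of $\supp(\mu_X)$ with $V_i\subset B_{\rho/24}(x_i)$. Define $\Phi\co L^2(X)\to\R^M$ by $\Phi(u)_i=\int_{B_{5\rho/8}(x_i)} u\,d\mu$. Since $\dim H = N+1 > M$, there is a nonzero $u_0\in H$ with $\Phi(u_0)=0$, so the $\mu$-average of $u_0$ over each $B_{5\rho/8}(x_i)$ vanishes.

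The analytic core is a local Poincar\'e inequality at scale $5\rho/8$: for every $x_0\in\supp(\mu_X)$ and $B_0:=B_{5\rho/8}(x_0)$,
\be\label{e:plan-poinc}
\int_{B_0}(u-\bar u_{B_0})^2\,d\mu \le \frac{C_1(\Lambda)}{\mu(B_0)}\,D^\rho_X(u).
\ee
Indeed, any pair $x,y\in B_0$ satisfies $d_X(x,y)\le 5\rho/4$, so the $BIV(\Lambda,5\rho/6,5\rho/12)$ hypothesis gives $\mu(B_{5\rho/6}(x)\cap B_{5\rho/6}(y))\ge\Lambda^{-1}\mu(B_{5\rho/4}(x))\ge\Lambda^{-1}\mu(B_0)$ (using the inclusion $B_0\subset B_{5\rho/4}(x)$). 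Averaging the identity $(u(x)-u(y))^2\le 2[(u(x)-u(z))^2+(u(z)-u(y))^2]$ over $z$ in this intersection, using $5\rho/6<\rho$ to absorb the inner distances into the Dirichlet form, and integrating over $B_0\times B_0$ yields $\iint_{B_0\times B_0}(u(x)-u(y))^2\,d\mu\,d\mu\le C(\Lambda)\,D^\rho_X(u)$, which is \eqref{e:plan-poinc} via the variance identity.

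Applying \eqref{e:plan-poinc} to $u_0$ at each $x_i$ (using $\bar{(u_0)}_{B_{5\rho/8}(x_i)}=0$) and summing over the partition $\{V_i\}$, and then converting from $\mu$ to $\mu^\rho$ via the pointwise bound $\mu(B_\rho(x))\le\Lambda\mu(B_{5\rho/6}(x_i))$ for $x\in V_i$ (coming from $B_\rho(x)\subset B_{5\rho/3}(x_i)$ and the restricted doubling), should yield $\|u_0\|^2_{X^\rho}\le C_2(\Lambda)\,\rho^2\,D^\rho_X(u_0)$; choosing $c<1/C_2(\Lambda)$ closes the contradiction. The main obstacle is precisely this final summation: one has to control the aggregate $\sum_i\mu(B_{5\rho/6}(x_i))/\mu(B_{5\rho/8}(x_i))$ so that the resulting constant depends only on $\Lambda$, not on $N$. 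I expect the resolution to come either from replacing the Voronoi partition by a cover of bounded overlap counted using the doubling inequality, or from using $\mu^\rho$-weighted averages for the $\ell_i$ so that the scales $5\rho/8$ and $5\rho/6$ cancel.
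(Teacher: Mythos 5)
Your variational outline (min--max contradiction, rank argument producing $u_0$ with vanishing ball averages, local Poincar\'e via the BIV intersection trick) is sensible and genuinely different from the paper's route, and the local Poincar\'e inequality you state is correct; but the step you yourself call ``the main obstacle'' is a real gap, and in the form you set it up it cannot be closed from the stated hypotheses, because those hypotheses act at a single scale. As written, your Poincar\'e inequality has the \emph{global} energy $D^\rho_X(u)$ on the right, so summing over the $M\approx N$ balls costs a factor of order $N$; to avoid that you must localize the right-hand side, and then you need exactly the two quantities you mention, neither of which is controlled by $\Lambda$. First, the overlap multiplicity of the balls $B_{5\rho/8}(x_i)$ over a $(\rho/24)$-separated set: if $X$ consists of $N$ points pairwise at distance $\rho/2$ with unit masses, then $BIV(1,\frac{5}{6}\rho,\frac{5}{12}\rho)$ and the restricted doubling hold with $\Lambda=1$, yet every $B_{5\rho/8}(x_i)$ is all of $X$, so the multiplicity is $N$; the restricted doubling (one scale, $\frac{5}{6}\rho\to\frac{5}{3}\rho$) yields no bounded-overlap covering lemma at scale $\rho/24$, so your first suggested repair is unavailable. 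Second, the ratio $\mu(B_{5\rho/6}(x_i))/\mu(B_{5\rho/8}(x_i))$ arising when you pass from $\mu$ to $\mu^\rho$: for two atoms of masses $1$ and $T$ at distance $s$ with $\frac{5}{8}\rho<s<\frac{5}{6}\rho$, all hypotheses hold with $\Lambda=1$ while this ratio equals $1+T$. Hence the key inequality $\|u_0\|^2_{X^\rho}\le C(\Lambda)\rho^2 D^\rho_X(u_0)$ does not follow from your scheme with averaging radius $\frac{5}{8}\rho$.

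For comparison, the paper avoids this bookkeeping entirely: it takes a maximal $(\rho/24)$-separated set $Y$, weights its points by Voronoi-cell measures as in Example \ref{x:discretization} (so $X$ and $Y$ are $(\rho/24,0)$-close), checks via Lemma \ref{l:conditions-stable} that $Y$ inherits the SLV and BIV conditions at scale $\rho$, and applies Theorem \ref{t:stability}; since $\dim L^2(Y)=N$ gives $\la_{N+1}(Y,\rho)=\infty$, the stability estimate forces $\la_{N+1}(X,\rho)>C^{-1}\rho^{-2}$. Your route can in fact be repaired, essentially along your second suggestion: enlarge the averaging balls to $B_i=B_{5\rho/6}(x_i)$ (for $x\in V_i$, $y\in B_i$ one still has $d(x,y)\le\frac{7}{8}\rho\le\frac{5}{4}\rho$, so BIV applies), and instead of the symmetric variance bound write, for $x\in V_i$, $u_0(x)$ as the $\mu$-average over $y\in B_i$ of $u_0(x)-u_0(y)$, then insert the BIV midpoint $z\in B_{5\rho/6}(x)\cap B_{5\rho/6}(y)$. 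In the term where the energy sits at $x$, the factor $\mu(B_\rho(x))$ from $\mu^\rho$ is cancelled by $\mu(B_{5\rho/6}(x)\cap B_{5\rho/6}(y))\ge\Lambda^{-1}\mu(B_\rho(x))$ and one sums over the \emph{disjoint} cells $V_i$; in the term where it sits at $y$, one uses $\mu(B_\rho(x))\le\mu(B_{5\rho/3}(x_i))\le\Lambda\mu(B_i)$ (this is exactly why the radius must be $\frac{5}{6}\rho$), and then $\sum_i\mu(V_i)$ over those $i$ with $y\in B_i$ is at most $\mu(B_\rho(y))$ because those cells are disjoint and lie in $B_{5\rho/6+\rho/24}(y)=B_{7\rho/8}(y)\subset B_\rho(y)$, which cancels the $1/\mu(B_\rho(y))$ coming from BIV applied at $y$. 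This gives $\|u_0\|^2_{X^\rho}\le 4(\Lambda+\Lambda^2)\rho^2 D^\rho_X(u_0)$, so any $c<\frac{1}{4}(\Lambda+\Lambda^2)^{-1}$ yields the contradiction: disjointness of the Voronoi cells, used on the right integration variable, replaces the bounded-overlap count that your version was missing.
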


If $X$ is a Riemannian manifold then $N_X(r) \sim C_n\mu(X)r^{-n}$
as $r\to 0$, 
where $n$ is the dimension of $X$.
In this case the conclusion of Theorem \ref{t:ess-spectrum}
can be restated as follows:
for $R=c\rho^{-2}$, we have 
$$
\#_X^\rho(R) \le C(n,\Lambda) \mu(X) R^{n/2} .
$$
The reader is invited to compare the right-hand side
of this formula with the classic Weyl's asymptotics
for the Beltrami--Laplace spectrum.

\begin{proof}[\bf Proof of Theorem \ref{t:ess-spectrum}]
Let $X$ be a mm-space satisfying the assumptions
of the theorem.
Fix $\ep=\rho/24$.
Let $Y$ be a maximal
$\ep$-separated set in~$X$
and $N=N_X(\ep)$ the cardinality of $Y$.
Then $Y$ is an $\ep$-net in $X$.
Equip $Y$ with a measure $\mu_Y$ as in Example \ref{x:discretization}
so that the resulting mm-space is $(\ep,0)$-close to $X$.

By the assumptions of the theorem,
$X$ satisfies the conditions $SLV(\Lambda,\rho-4\ep,10\ep)$
and $BIV(\Lambda,\rho-4\ep,10\ep)$.
By Lemma \ref{l:conditions-stable}
it follows that
$Y$ satisfies $SLV(6\Lambda,\rho,2\ep)$ and $BIV(\Lambda,\rho,2\ep)$.
Therefore Theorem~\ref{t:stability} applies to $X$ and $Y$ with $6\Lambda$
in place of $\Lambda$. 
By Theorem~\ref{t:stability}, for every $k\ge 1$ at least
one of the following holds:
either
$$
 \la_k(X,\rho) > C^{-1}\rho^{-2}
$$
or
$$
  C^{-1}\le\frac{\la_k(X,\rho)}{\la_k(Y,\rho)} \le C
$$
where $C$ is a constant depending only on $\Lambda$.
Since $\dim L^2(Y)=N$, we have $\la_k(Y,\rho)=\infty$
for all $k>N$.
Hence for $k=N+1$ the second alternative above cannot occur
unless $\la_k(X,\rho)=\infty$.
We conclude that $\la_{N+1}(X,\rho)>C^{-1}\rho^{-2}$.
Therefore for $c=C^{-1}$ we have  $\#_X^\rho(c\rho^{-2})\le N$.
\end{proof}

\begin{theorem}\label{t:many-eigenvalues}
Let $X=(X,d,\mu)$ be a mm-space whose measure has full support.
Let $\mu^\rho$ be the measure defined by \eqref{e:murho}.
Let $r\ge \rho$ and $N=N_X(3r)$. Then
$$
 \la_N(x,\rho) \le 4 Q(r) r^{-2}
$$
where
$$
 Q(r) = \sup_{x\in X}\frac{\mu^\rho(B_{2r}(x))}{\mu^\rho(B_{r/2}(x))} .
$$
\end{theorem}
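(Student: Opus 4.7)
The plan is to apply the min-max formula \eqref{e:minmax} to an explicit $N$-dimensional test subspace built from ``tent'' functions centered at a maximally separated set. Let $\{x_1,\dots,x_N\}$ be a maximal $3r$-separated subset of $X$ with $N=N_X(3r)$, and define
$$
 u_i(x) = \max\bigl\{0,\ r - d(x,x_i)\bigr\},\qquad i=1,\dots,N.
$$
Each $u_i$ is $1$-Lipschitz, is supported in the closed ball $B_r(x_i)$, and satisfies $u_i\ge r/2$ on $B_{r/2}(x_i)$. Because the centers are $3r$-separated and $\rho\le r$, any pair with $d(x,y)<\rho$ can meet the support of at most one $u_i$; in particular the supports $B_r(x_i)$ are pairwise disjoint. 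Since $\mu$ has full support, each $u_i$ is nonzero in $L^2$, so $\mathrm{span}\{u_1,\dots,u_N\}$ is a genuine $N$-dimensional test subspace.

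The decomposition of norms and Dirichlet form then follows: for any $u=\sum c_i u_i$,
$$
 \|u\|_{X^\rho}^2 = \sum_i c_i^2 \|u_i\|_{X^\rho}^2,\qquad D^\rho_X(u)=\sum_i c_i^2 D^\rho_X(u_i),
$$
the second identity because cross terms $(u_i(x)-u_j(y))$ with $i\ne j$ contribute zero under the constraint $d(x,y)<\rho$. Hence
$$
 \frac{D^\rho_X(u)}{\|u\|_{X^\rho}^2}\le \max_i \frac{D^\rho_X(u_i)}{\|u_i\|_{X^\rho}^2},
$$
and it suffices to bound this single Rayleigh quotient by $4Q(r)r^{-2}$.

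For each $u_i$, the lower bound $u_i\ge r/2$ on $B_{r/2}(x_i)$ gives
$$
 \|u_i\|_{X^\rho}^2 \ge \tfrac{r^2}{4}\,\mu^\rho\bigl(B_{r/2}(x_i)\bigr).
$$
For the Dirichlet form, the $1$-Lipschitz property gives $(u_i(x)-u_i(y))^2\le\rho^2$ whenever $d(x,y)<\rho$; and for the integrand to be nonzero at least one of $x,y$ must lie in $\supp u_i\subseteq B_r(x_i)$. Bounding by (twice) the pairs with $x\in B_r(x_i)$ and using $d\mu^\rho/d\mu=\rho^2\mu(B_\rho(\cdot))$,
$$
 D^\rho_X(u_i)\le \rho^2\int_{B_r(x_i)}\mu(B_\rho(x))\,d\mu(x) = \mu^\rho\bigl(B_r(x_i)\bigr)\le \mu^\rho\bigl(B_{2r}(x_i)\bigr).
$$
Combining the two bounds yields
$$
 \frac{D^\rho_X(u_i)}{\|u_i\|_{X^\rho}^2}\le \frac{4}{r^2}\cdot\frac{\mu^\rho(B_{2r}(x_i))}{\mu^\rho(B_{r/2}(x_i))}\le \frac{4Q(r)}{r^2},
$$
and the min-max formula \eqref{e:minmax} delivers $\la_N(X,\rho)\le 4Q(r)r^{-2}$.

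There is no really hard step here: the only genuine obstacle is to verify that the decomposition of $D^\rho_X(u)$ along the bumps is \emph{exact}, not merely approximate, and that is guaranteed by $r\ge\rho$ together with $3r$-separation, which keeps the supports at pairwise distance $\ge r\ge\rho$. The symmetrization step $\text{``at least one in }B_r(x_i)\text{''}\le 2\cdot \text{``}x\in B_r(x_i)\text{''}$ is what produces the factor $4$ (rather than $2$) in front of $Q(r)$; everything else is a direct computation.
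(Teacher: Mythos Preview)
Your proof is correct and follows essentially the same approach as the paper's: both use tent functions centered at a $3r$-separated set, exploit the $\ge\rho$ separation of their supports to decompose the Rayleigh quotient exactly, and bound each individual quotient via the Lipschitz estimate and the support structure. The only cosmetic differences are that the paper normalizes the tents to height~$1$ (making them $1/r$-Lipschitz) rather than height~$r$, and bounds the Dirichlet integral over $B_{r+\rho}(x_i)$ instead of using your symmetrization to $B_r(x_i)$; neither affects the argument or the constant.
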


For spaces satisfying reasonable assumptions,
Theorem \ref{t:many-eigenvalues} complements Theorem \ref{t:ess-spectrum}
by giving a lower bound on $\#_X^\rho(c\rho^{-2})$
of the same order of magnitude as in Theorem~\ref{t:ess-spectrum}.
Indeed, let $c$ be the constant from Theorem~\ref{t:ess-spectrum}
and assume that $Q(r)\le Q_{\max}$ for all $r\ge\rho$.
Then, applying Theorem \ref{t:many-eigenvalues} 
to $r=2\sqrt{c^{-1}Q_{\max}}\,\rho$ we get
$$
 \#_X^\rho(c\rho^{-2}) = 
 \#_X^\rho(4Q_{\max}r^{-2}) \ge N_X(3r)
 = N_X(C_1\rho)
$$
where $C_1=6\sqrt{c^{-1}Q_{\max}}$.

\begin{proof}[\bf Proof of Theorem \ref{t:many-eigenvalues}]
By the min-max formula \eqref{e:minmax}, it suffices to construct a
linear subspace $H\subset L^2(X)$ such that $\dim H=N$ and
\be\label{e:many-eigenvalues1}
D^\rho_X(u)\le 4Q(r)r^{-2}\|u\|^2_{L^2(X,\mu^\rho)}
\ee
for every $u\in H$.
Here $D^\rho_X$ is the Dirichlet form given by \eqref{e:dirichlet}.

Let $\{x_1,\dots,x_N\}$ be a $3r$-separated set in $X$.
For each $i$, define a function $u_i\co X\to\R$ by
$$
 u_i(x)= \max\left\{ 1- \frac{d(x, x_i)}{r} , 0 \right\} .
$$
Let $H$ be the linear span of $u_1,\dots,u_N$.
We are going to show that \eqref{e:many-eigenvalues1}
is satisfied for all $u\in H$.
The supports of $u_i$'s are separated
by distance at least $\rho$.
Hence $u_i\perp u_j$ and $\Dr_X(u_i)\perp u_j$
in $L^2(X,\mu^\rho)$  for all $i\ne j$.
Therefore it suffices to verify \eqref{e:many-eigenvalues1}
for $u=u_i$ only.

\smallbreak
Since $u_i(x)\ge \frac12$ for all $x\in B_{r/2}(x_i)$, we have
\begin{align*}
\|u_i\|^2_{L^2(X,\mu^\rho)} &= \rho^2 \int_X \mu(B_\rho(x)) u^2_i(x) d \mu(x) \\ 
&\geq \frac{\rho^2}{4} \int_{B_{r/2}(x_i)} \mu(B_\rho(x)) d\mu(x)
= \frac{\rho^2}{4}\mu^\rho(B_{r/2}(x)) .
\end{align*}
Since $u_i(x)=0$ if $x\notin B_r(x_i)$ and
$u_i$ is $(1/r)$-Lipschitz, we have
\begin{align*}
D^\rho_X(u_i) &= \int_{B_{r+\rho}(x_i)} \int_{B_\rho(x)} |u_i(x)-u_i(y)|^2 \,  d\mu(y)d\mu(x) \\ 
 &\leq \frac{\rho^2}{r^2} \int_{B_{r+\rho}(x_i)} \mu(B_\rho(x)) \, d\mu(x) 
 = \frac{\rho^2}{r^2} \mu^\rho(B_{r+\rho}(x_i)).
\end{align*}
Thus
$$
 \frac{D^\rho_X(u_i)}{\|u_i\|^2_{L^2(X,\mu^\rho)}}
 \le \frac 4{r^2} \frac{\mu^\rho(B_{r+\rho}(x_i))}{\mu^\rho(B_{r/2}(x))} 
 \le 4r^{-2} Q(r) .
$$
The theorem follows.
\end{proof}

\end{document}